\documentclass{article}
\usepackage[a4paper, total={5.1in, 8in}]{geometry}
\usepackage{graphicx} 
\usepackage{imakeidx}
\makeindex

\usepackage{amsmath}
\usepackage{amssymb}
\usepackage{amsthm}
\usepackage{bussproofs}
\usepackage{makecell}
\usepackage{multicol}
\usepackage{bbm}
\usepackage{xcolor}
\usepackage{graphicx}
\usepackage{enumitem}
\usepackage{booktabs}
\usepackage{tikz}

\usepackage{hyperref}
\usepackage{cleveref}

\newtheorem{theorem}{Theorem}[section]
\newtheorem{definition}[theorem]{Definition}%
\newtheorem{proposition}[theorem]{Proposition}%
\newtheorem{lemma}[theorem]{Lemma}%
\newtheorem{fact}[theorem]{Fact}%
\newtheorem{example}[theorem]{Example}%

\DeclareFontFamily{U}{mathb}{}
\DeclareFontShape{U}{mathb}{m}{n}{<-5.5> mathb5 <5.5-6.5> mathb6 
<6.5-7.5> mathb7 <7.5-8.5> mathb8 <8.5-9.5> mathb9 <9.5-11> mathb10 
<11-> mathb12}{}
\DeclareFontSubstitution{U}{mathb}{m}{n}
\DeclareRobustCommand{\blackdiamond}{\mathbin{\text{\usefont{U}{mathb}{m}{n}\symbol{"0C}}}}

\usepackage{caption} 

\usepackage{hypcap}

\def\vdashv{\dashv\vdash}
\newcommand{\fullsubseteq}{\!\mathrel{\rlap{\raisebox{0.245ex}{\hspace{0.45em}$\bullet$}}\subseteq}}

\def\sempty{\langle\rangle}

\newcommand{\uglor}%
 {\mathop{%
    \raisebox{0.25ex}{\scalebox{0.65}{\rotatebox{4}{$\setminus$}}}%
    \hspace{-2.6pt}%
    \scalebox{1}{$\lor$}%
    \hspace{-4.3pt}%
    \raisebox{0.66ex}{\scalebox{0.65}{\rotatebox{-44}{$\setminus$}}}%
  }}

\newcommand{\glor}{%
  \mathop{%
    \raisebox{0.25ex}{\scalebox{0.65}{\rotatebox{4}{$\setminus$}}}%
    \hspace{-2.6pt}%
    \raisebox{0pt}{%
      \scalebox{1}{%
        \ooalign{%
          $\lor\!\!$
        }%
      }%
    }%
  }%
}

\newcommand{\Glor}{\mathop{
\scalebox{1.5}
{\raisebox{-.27ex}{\scalebox{0.95}{\rotatebox{4}{$\setminus$}}\hspace{-4pt}\scalebox{1.5}{\raisebox{-.2ex}{$\lor$}}}}}}

\newcommand{\PLprim}{PL\ensuremath{(\subseteq_0)}}

\newcommand{\PLglor}{PL^{\glor}}

\usepackage{enumitem}

\usepackage{oplotsymbl}
\let\singlemight=\trianglepbdot
\let\might=\trianglepb

\title{Capturing dual team properties with inclusion atoms}
\author{Matilda Häggblom}
\date{University of Helsinki}

\begin{document}

\maketitle

\noindent \textbf{Abstract. }
We introduce propositional team-based logics expressively complete for (quasi) downward and (quasi) upward closed properties in a syntactically dual way, by using variants of the inclusion atom. In particular, the variants of the primitive inclusion atoms used in the (quasi) upward closed setting have equivalent formulas using variants of the might modality. The duality is visible in the logics' normal forms, mirroring the duality between the (quasi) upward and downward closed settings, where the quasi variants take special care of the empty and full team. Furthermore, we defined sound and complete natural deduction systems for each logic.

\section{Introduction}

We ask whether we can define propositional team-based logics in a \emph{dual} way, reflecting the duality between downward and upward closed properties, with (and without) the empty/full team. We answer affirmatively by introducing such logics using variants of the inclusion atom, producing a surprisingly symmetric picture seen in the four logics’ normal forms. We also introduce a sound and complete natural deduction system for each logic.  Notably, the atoms in the logics for (quasi) upward closed properties have close connections to the might modalities in the literature.

We show that each of the four logics is expressively complete for all team properties of the relevant closure properties, and define sound and complete proof systems. All propositional team logics are compact, using arguments from inquisitive logic in \cite{ciardelli2009}, presented in \cite{YANG2017} with the team semantics notation. Hence, we obtain strong compactness theorems for each logic.

\bigbreak

This is a working paper.

\section{Upward and downward closed logics}

We recall basic definitions about teams, team properties and propositional team logic. We then introduce four variants of upward and downward closed propositional team logics and provide each with an expressive completeness result.  

Fix a finite set of propositional symbols $\mathbb{P}$. A set of valuations $v:\mathbb{P}\rightarrow\{0,1\}$ is a team. We say here that $\mathcal{C}$ is a team property if it is a nonempty collection of teams over $\mathbb{P}$, i.e., $\mathcal{C}\subseteq P(2^{\mathbb{P}})$. 

Two interesting teams are the empty team $\emptyset$ and the full team $\mathbb{F}:=2^{\mathbb{P}}$, i.e.,  the maximal team over $\mathbb{P}$. We consider the empty/full team property of any collection $\mathcal{C}$ of teams.

\begin{enumerate}[label=\textbf{-}]
    \item $\mathcal{C}$ has the empty team property\index{empty team property}  if $\emptyset\in\mathcal{C}$.

    \item $\mathcal{C}$ has the full team property\index{full team property} if $\mathbb{F}\in\mathcal{C}$.
\end{enumerate}

The four closure properties central to the results of this paper are listed next, where we note the special roles of the full and empty teams in the quasi variants.

\begin{enumerate}[label=\textbf{-}]
    \item $\mathcal{C}$ is downward closed\index{downward closed}  if for all $T\in\mathcal{C}$ and $S\subseteq T$, $S\in\mathcal{C}$.

    \item $\mathcal{C}$ is \emph{quasi} downward closed\index{quasi downward closed}  if $\mathbb{F}\in\mathcal{C}$ and $\mathcal{C}\setminus\{\mathbb{F}\}$ is downward closed.

    \item $\mathcal{C}$ is upward closed\index{upward closed} if for all $T\in\mathcal{C}$ and $S\supseteq T$, $S\in\mathcal{C}$.

    \item $\mathcal{C}$ is \emph{quasi} upward closed\index{quasi upward closed} if $\emptyset\in\mathcal{C}$ and $\mathcal{C}\setminus\{\mathsf{\emptyset}\}$ is upward closed.
\end{enumerate}

Note that if we have a downward closed property that contains the full team, it would contain all teams and thus represent the trivial team property. The symmetrical situation appears for an upward closed team property containing the empty team. In contrast, the quasi properties avoid this trivialization. 

For a team-based logic $\mathcal{L}$, each formula $\phi \in\mathcal{L}$ (with propositional symbols from $\mathbb{P}$) defines a team property $\lVert \phi\rVert:=\{t\subseteq\mathbb{F}\mid T\models\alpha\}$. For a set of formulas $\Gamma$, we write $\Gamma\models\psi$ if $\lVert \psi\rVert\supseteq \bigcap_{\phi\in\Gamma}\lVert \phi\rVert$. If $\Gamma$ is a singleton $\{\phi\}$, we drop the brackets and simply write $\phi\models\psi$. We say that two formulas $\phi$ and $\psi$ are semantically equivalent if $\lVert \psi\rVert= \lVert \phi\rVert$ and denote this by $\phi\equiv\psi$.

We say that a logic has a specific (closure) property if all formulas $\phi$ in the logic are such that $\lVert\phi\rVert$ has that property. If, additionally, for any collection of teams $\mathcal{C}$ with the desired property, there is some $\phi$ in the logic such that $\mathcal{C}=\lVert \phi\rVert$, we say that the logic is expressively complete\index{expressive completeness} for such team properties.  
For instance, $\mathcal{L}$ is expressively complete for all downward closure team properties containing the empty team if and only if  
$$\{\lVert \phi\rVert\mid \phi\in\mathcal{L}\}=\{\mathcal{C}\mid \emptyset\in\mathcal{C}\text{ and $\mathcal{C}$ is downward closed} \}.$$

This is the case for classical propositional team logic ($PL$) extended with the global disjunction $\glor$, making $\PLglor$ expressively complete for all downward closed team properties with the empty team \cite{Yang16}. 
The syntax for $\PLglor$\index{$\PLglor$} is given by the grammar:
\begin{equation*}
   \phi ::= \bot\mid p\mid\neg p\mid(\phi\land \phi)\mid(\phi\lor\phi)\mid(\phi\glor\phi),
\end{equation*}
where $p\in\mathbb{P}$.  We obtain the logic $PL$\index{$PL$} by restricting to the $\glor$-free fragment. 
We recall the semantic clauses for $\PLglor$. 
\begin{align*}
T\models\top  \quad \text{iff}\quad& \text{always}.\\
T\models\bot  \quad \text{iff}\quad&T=\emptyset.\\
T\models p  \quad \text{iff}\quad& v(p)=1 \text{ for all }v\in T. \\
T\models\neg p  \quad \text{iff}\quad& v(p)=0 \text{ for all }v\in T.  \\
T\models\phi\land\psi  \quad \text{iff}\quad& T\models\phi \text{ and } T\models\psi. \\
 T\models\phi\lor\psi     \quad \text{iff}\quad& T_1\models\phi \text{ and } T_2\models\psi \text{ for some } T_1,T_2\subseteq T \text{ such that }  T_1\cup T_2=T.\\
T\models\phi\glor\psi  \quad \text{iff}\quad& T\models\phi \text{ or } T\models\psi. 
 \end{align*}

 In particular, it is easy to see that the team property $\lVert p \rVert$ is downward closed and not upward closed. Since our goal is to define both upward and downward closed logics in a dual way, we need some alternative atomic formulas. For this purpose, we use variants of inclusion atoms with constants as our main atoms.

 Propositional inclusion atoms with constants are of the form $\mathsf{a\subseteq\mathsf{b}}$, where $\mathsf{a}$ and $\mathsf{b}$ are sequences of the same length consisting of propositional symbols and constants $\top$ and $\bot$. Let us extend the domain of the valuations to include the constants, such that valuations are of the form $v:\mathbb{P}\cup\{\top,\bot\}\rightarrow\{0,1\}$ and $v(\top)=1$ and $v(\bot)=0$ always hold. We also stipulate that $\bigwedge\emptyset=\top$ and $\bigvee\emptyset=\bot$.
 
 For a valuation $v$ and a sequence of propositional symbols and constants $\mathsf{a}=a_1\dots a_n$, we write $v(\mathsf{a})$ as shorthand for the tuple $ v(a_1)\dots  v(a_n)$. Now
 $$T\models \mathsf{a}\subseteq \mathsf{b}  \quad \text{iff}\quad \text{for all $v\in T$, there is some $v'\in T$ such that $v(\mathsf{a})=v'(\mathsf{b})$}   .$$ 

 Inclusion atoms are neither downward nor upward closed, but they have the empty team property and are union closed, that is, if $T_i\models\mathsf{a}\subseteq \mathsf{b}$ for all $i\in I\neq \emptyset$, then $\bigcup_{i\in I}T_i\models\mathsf{a}\subseteq \mathsf{b}$.

We now introduce two novel variants of the inclusion atom, the nonempty inclusion atom $\mathsf{a}\subseteqq \mathsf{b}$ and the full inclusion atom $\mathsf{a}\fullsubseteq \mathsf{b}$, with the following semantic clauses.
 \begin{align*}
    T\models\mathsf{a}\subseteqq \mathsf{b}\quad \text{iff}\quad& t\neq\emptyset \text{ and } T\models \mathsf{a}\subseteq \mathsf{b}.\\
T\models\mathsf{a}\fullsubseteq \mathsf{b}\quad \text{iff}\quad& T=\mathbb{F} \text{ or }T\models \mathsf{a}\subseteq \mathsf{b}. 
\end{align*}

To obtain inclusion atoms with the desired (quasi) upward closed properties, we restrict the syntax of the sequences $\mathsf{a}$ and $\mathsf{b}$. Let $\mathsf{p}$ be a sequence of propositional symbols from $\mathbb{P}$ and let $\mathsf{x}$ be a sequence of constants $\top,\bot$ such that $|\mathsf{p}|=|\mathsf{x}|$. Primitive inclusion atoms from \cite{yang_propositional_2022} are of the form $\mathsf{x}\subseteq \mathsf{p}$. For a nonempty team $T$, $T\models \mathsf{x}\subseteq \mathsf{p}$ if and only if there is a valuation $v\in T$ such that $v(\mathsf{p})=v(\mathsf{x})$. Primitive inclusion atoms are quasi upward closed, since if $T\models \mathsf{x}\subseteqq \mathsf{p}$ and $S\supseteq T$, then there is a valuation $v\in T$ such that $v(\mathsf{p})=v(\mathsf{x})$, hence also $v\in S$ and $S\models\mathsf{x}\subseteq \mathsf{p}$ follows. 

By similar observations, the nonempty primitive inclusion atom $\mathsf{x}\subseteqq\mathsf{p}$ is upward closed, since now for all teams $T$, $T\models \mathsf{x}\subseteq \mathsf{p}$ if and only if there is a valuation $v\in T$ such that $v(\mathsf{p})=v(\mathsf{x})$. Since we aim to obtain logics expressively complete for nonempty team properties, we further demand that the sequence of propositional symbols $\mathsf{p}$ in a nonempty inclusion atom $\mathsf{x}\subseteqq\mathsf{p}$ does not have any repeated propositional symbols. Otherwise, we would have $\top\bot\subseteqq p_1p_1$ which is satisfied by no team. For the sake of uniformity, we chose to extend this syntactical restriction also to the primitive inclusion atoms $\mathsf{x}\subseteq\mathsf{p}$ that will be included in the logic $\mathcal{L}_{qu}$. 

To obtain atoms suitable for the downward closed setting, we consider inclusion atoms of the form $\mathsf{p}\subseteq\mathsf{x}$, dual of the primitive inclusion atom.  In particular, for $\mathsf{p}=p_1\dots p_n$ and $\mathsf{x}=x_1\dots x_n$, we have the semantic equivalence $\mathsf{p}\subseteq\mathsf{x}\equiv \mathsf{p}^\mathsf{x}$, where  $\mathsf{p}^\mathsf{x}=p_1^{\mathsf{x}_1}\land\dots \land p_n^{\mathsf{x}_n}$ with $p_i^{\top}=p$ and $p_i^{\bot}=\neg p$. Since a conjunction of literals defines a downward closed team property, so does the dual primitive inclusion atom.
Similarly, full inclusion atoms of the form $\mathsf{p}\fullsubseteq\mathsf{x}$ define quasi downward closed team properties.

To summarize, we obtain the desired property of each inclusion atom: primitive inclusion atoms  $\mathsf{x}\subseteq \mathsf{p}$ are 
quasi upward closed, nonempty primitive inclusion atoms  $\mathsf{x}\subseteqq \mathsf{p}$ are 
upward closed, dual full primitive inclusion atoms $\mathsf{p}\fullsubseteq \mathsf{x}$ are quasi downward closed, and finally, dual primitive inclusion atoms $\mathsf{p}\subseteq \mathsf{x}$ are downward closed.

With the main atoms defined, we are now ready to give the syntax of our four logics. 

\begin{definition}
    The grammars of the logics $\mathcal{L}_{qu}$, $\mathcal{L}_{u}$, $\mathcal{L}_{qd}$ and $\mathcal{L}_{d}$ are as follows.

\begin{enumerate}
    \item[$(\mathcal{L}_{qu})$] \centering \quad $\phi::= \bot \mid \mathsf{x}\subseteq \mathsf{p} \mid (\phi\land\phi)\mid (\phi\glor\phi).$ \hspace{1.2cm}

    \item[$(\mathcal{L}_{u})$] \centering \quad $\phi::= \top \mid \mathsf{x}\subseteqq \mathsf{p} \mid (\phi\land\phi)\mid (\phi\glor\phi)$.  \hspace{1.2cm}

     \item[$(\mathcal{L}_{qd})$] \centering \quad $\phi::= \bullet\mid\mathsf{p}\fullsubseteq\mathsf{x} \mid (\phi\land\phi) \mid (\phi\lor\phi)\mid (\phi\glor\phi)$.

      \item[$(\mathcal{L}_d)$] \centering \quad $\phi::= \bot\mid\mathsf{p}\subseteq\mathsf{x} \mid (\phi\land\phi) \mid (\phi\lor\phi)\mid (\phi\glor\phi)$.
\end{enumerate}
\end{definition}

The semantics of the constants, atoms and connectives are familiar, and we add the semantic clause for the \emph{full atom} $\bullet$, for which
 \begin{align*}
    T\models\bullet\quad \text{iff}\quad& T=\mathbb{F}. 
\end{align*}

Next, we show that the logics have the desired closure properties. 

\begin{proposition}
The logic $\mathcal{L}_{qu}$ is quasi upward closed, $\mathcal{L}_u$ is upward closed, $\mathcal{L}_{qd}$ is quasi downward closed, and $\mathcal{L}_d$ is downward closed.
\end{proposition}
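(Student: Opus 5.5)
We will argue by induction on the structure of formulas, treating each of the four logics separately. For each logic we verify the base cases (the constant and the atom), then show that each connective in that grammar preserves the relevant closure property.

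\textbf{Base cases.} The atoms were already handled in the discussion preceding the definition of the logics: $\mathsf{x}\subseteq\mathsf{p}$ is quasi upward closed, $\mathsf{x}\subseteqq\mathsf{p}$ is upward closed, $\mathsf{p}\fullsubseteq\mathsf{x}$ is quasi downward closed, and $\mathsf{p}\subseteq\mathsf{x}$ is downward closed. The last of these holds via $\mathsf{p}\subseteq\mathsf{x}\equiv\mathsf{p}^{\mathsf{x}}$. For $\mathsf{p}\fullsubseteq\mathsf{x}$, its property minus $\{\mathbb{F}\}$ is contained in $\lVert\mathsf{p}\subseteq\mathsf{x}\rVert$, which is downward closed and does not contain $\mathbb{F}$ unless $\mathsf{p}$ is empty (a degenerate case that gives all teams). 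For the constants:
\begin{itemize}
\item $\lVert\bot\rVert=\{\emptyset\}$ is downward closed, and it is quasi upward closed because $\{\emptyset\}\setminus\{\emptyset\}=\emptyset$ is vacuously upward closed.
\item $\lVert\top\rVert$ is the set of all teams, hence upward closed.
\item $\lVert\bullet\rVert=\{\mathbb{F}\}$ is quasi downward closed, since removing $\mathbb{F}$ leaves the vacuously downward closed empty collection.
\end{itemize}
All four of these collections are nonempty, so they are team properties.

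\textbf{Inductive steps.} Conjunction corresponds to intersection and $\glor$ corresponds to union of team properties. Plain upward and downward closure are preserved by arbitrary unions and intersections. For the quasi variants:
\begin{itemize}
\item The distinguished team ($\emptyset$ or $\mathbb{F}$) belongs to both arguments, so it belongs to their intersection and union.
\item $(\mathcal{C}\cap\mathcal{D})\setminus\{\mathbb{F}\}=(\mathcal{C}\setminus\{\mathbb{F}\})\cap(\mathcal{D}\setminus\{\mathbb{F}\})$, and the same holds for $\cup$ and for removing $\emptyset$. The closure of the reduced collections therefore transfers.
\end{itemize}

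\textbf{The tensor $\lor$ in $\mathcal{L}_d$.} Suppose $T=T_1\cup T_2$ with $T_1\models\phi$, $T_2\models\psi$, and $S\subseteq T$. Take $S_i=T_i\cap S$. By the induction hypothesis (downward closure) $S_1\models\phi$ and $S_2\models\psi$, and $S_1\cup S_2=S$. The empty team satisfies $\phi\lor\psi$ by splitting $\emptyset=\emptyset\cup\emptyset$.

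\textbf{The tensor $\lor$ in $\mathcal{L}_{qd}$.} I expect this to be the main obstacle. First, $\mathbb{F}\models\phi\lor\psi$ via the split $\mathbb{F}=\mathbb{F}\cup\mathbb{F}$. Next, let $T\neq\mathbb{F}$ with $T\models\phi\lor\psi$ via $T_1\cup T_2$, and let $S\subseteq T$. Then $T_i\subseteq T\neq\mathbb{F}$, so neither $T_i$ is $\mathbb{F}$. Hence each $T_i$ lies in the downward closed part of the respective property, so $S\cap T_i$ still satisfies the respective formula, and $S\models\phi\lor\psi$. The subtle point is exactly this observation that a proper team cannot have a full part, so the exceptional team $\mathbb{F}$ never enters a split of a non-full team. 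Finally, $S\subseteq T\neq\mathbb{F}$ gives $S\neq\mathbb{F}$, so $\lVert\phi\lor\psi\rVert\setminus\{\mathbb{F}\}$ is downward closed.
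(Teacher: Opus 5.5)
Your proof is correct and follows the paper's route: induction on formulas, with the $\land$ and $\glor$ steps handled as in the paper's $\mathcal{L}_{qu}$ cases, which you phrase as intersection and union of the defined properties so that all four logics are treated uniformly. You also supply the base cases and the split-disjunction steps for $\mathcal{L}_d$ and $\mathcal{L}_{qd}$ that the paper leaves as ``straightforward'', and your observation that no part of a split of a non-full team can be $\mathbb{F}$ is exactly what is needed for $\mathcal{L}_{qd}$.
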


\begin{proof}
Proving the claim for each logic by induction on the complexity of the formulas is straightforward; here, we include the induction cases for $\land$ and $\glor$ for the logic $\mathcal{L}_{qu}$.

Let $\phi$ and $\psi$ be quasi upward closed formulas and assume that $S\supseteq T$. By the empty team property of the formulas, clearly $\emptyset\models \phi\land\psi$. Now suppose that $T$ is nonempty and $T\models \phi\land\psi$. Then $T\models \phi$ and $T\models \psi$, so by quasi upward closure of the formulas and the assumption that $T$ is nonempty, $S\models \phi$ and $S\models \psi$, hence $S\models \phi\land \psi$. 

Consider now global disjunctive formulas $\phi\glor \psi$. By assumption $\phi$ has the empty team property, hence  $\emptyset\models\phi$ and $\emptyset\models\phi\glor\psi$ follows. Now, consider a nonempty team $T$ such that $T\models \phi\land\psi$. Then  $T\models \phi$ or $T\models \psi$, so by quasi upward closure of the formulas, $S\models \phi$ or $S\models \psi$, hence $S\models \phi\glor \psi$ as desired. 
\end{proof}

A unusual feature of the quasi upward closed setting is that the global disjunction $\glor$ and split-disjunction $\lor$ coincide.

\begin{proposition}\label[proposition]{qu glor equiv lor}
    Let $\phi$ and $\psi$ have the empty team property and further assume that $\psi$ is quasi upward closed, then $\phi\lor\psi\equiv \phi\glor\psi$. 
\end{proposition}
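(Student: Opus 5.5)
We prove the two entailments $\phi\glor\psi\models\phi\lor\psi$ and $\phi\lor\psi\models\phi\glor\psi$ separately, working directly from the semantic clauses for $\lor$ and $\glor$. Only the empty team property of both disjuncts and the quasi upward closure of $\psi$ are needed. No induction is required, since $\phi$ and $\psi$ are arbitrary formulas with the stated properties.

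For $\phi\glor\psi\models\phi\lor\psi$, take $T$ with $T\models\phi$ or $T\models\psi$. If $T\models\phi$, split $T$ as $T_1=T$, $T_2=\emptyset$. Then $T_1\models\phi$, and $T_2\models\psi$ by the empty team property of $\psi$, so $T\models\phi\lor\psi$. If $T\models\psi$, use the split $T_1=\emptyset$, $T_2=T$ symmetrically, now invoking the empty team property of $\phi$. This direction uses only the empty team property.

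For $\phi\lor\psi\models\phi\glor\psi$, take $T$ with $T_1\cup T_2=T$, $T_1\models\phi$ and $T_2\models\psi$, and distinguish two cases according to whether $T_2$ is empty.
\begin{enumerate}
\item If $T_2=\emptyset$, then $T_1=T$, so $T\models\phi$ and hence $T\models\phi\glor\psi$.
\item If $T_2\neq\emptyset$, then since $T_2\subseteq T$ and $\psi$ is quasi upward closed, i.e.\ $\lVert\psi\rVert\setminus\{\emptyset\}$ is upward closed, we get $T\models\psi$ and hence $T\models\phi\glor\psi$.
\end{enumerate}
This case split is the only place where care is needed: quasi upward closure only transfers satisfaction upward from \emph{nonempty} teams. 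That is exactly why the empty subteam $T_2$ must be handled separately, via $T_1=T$. The argument is asymmetric, in that quasi upward closure is needed only for $\psi$, matching the hypotheses of the statement. Combining both directions gives $\lVert\phi\lor\psi\rVert=\lVert\phi\glor\psi\rVert$.
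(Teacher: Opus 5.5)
Your proof is correct and matches the paper's argument: the $\phi\glor\psi\models\phi\lor\psi$ direction uses the splits $T\cup\emptyset$ and $\emptyset\cup T$ together with the empty team property, and the converse splits on whether $T_2$ is empty, invoking quasi upward closure of $\psi$ when it is not. The only cosmetic difference is that the paper disposes of $T=\emptyset$ separately at the start, which your case split makes unnecessary.
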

\begin{proof}
Since both $\phi$ and $\psi$ have the empty team property, the semantic equivalence holds over the empty team. Suppose now that $T\neq\emptyset$. 

First, let $T\models\phi\lor\psi$. Then there are subteams $T_1,T_2\subseteq T$ such that $T=T_1\cup T_2$, $T_1\models \phi$ and $T_2\models\phi$. If $T_2=\emptyset$, then $T_1=T$, hence $T\models\phi$ and $T\models\phi\glor\psi$ follows. Otherwise, $T_2\neq\emptyset$ and thus by quasi upward closure of $\psi$, $T\models\psi$ from which $T\models\phi\glor\psi$ follows.

The other direction follows by the empty team property of the formulas. Suppose that $T\models \phi\glor\psi$. If $T\models \phi$, then $T=T\cup\emptyset\models \phi\lor\psi$. Similarly, if  $T\models \psi$, then $T=\emptyset\cup T\models \phi\lor\psi$. 
\end{proof}

We also consider the strict disjunction\index{strict disjunction} $\lor_s$ with the following semantic clause.  \begin{align*}
    T\models\phi\lor_s\psi\quad \text{iff}\quad& \text{there are disjoint teams $T_1$ and $T_2$ such that $T_1\cup T_2=T$,} \\
    &\text{$T_1\models\phi$ and $T_2\models\psi$}. 
\end{align*}
It is easy to see that in the quasi upward closed setting, $\phi\lor_s\psi\equiv\phi\lor\psi\equiv\phi\glor\psi$, making this setting resilient to changes in the semantic clause of its disjunction.

In the upward closed setting, without the empty team property, $\phi\lor\psi\models \phi\glor\psi$ still holds, but not the other direction: $\phi\glor\psi\not\models \phi\lor\psi$ exemplified by $\emptyset\models\top\glor \top\subseteqq p$ but $\emptyset\not\models \top\lor \top\subseteqq p$. Instead, we observe a different semantic equivalence: that between split-disjunction and conjunction.

\begin{proposition}
 Let $\phi$ and $\psi$ be upward closed formulas. Then $\phi\lor\psi\equiv \phi\land\psi$. 
    \end{proposition}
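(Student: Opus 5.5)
We establish the two entailments $\phi\lor\psi\models\phi\land\psi$ and $\phi\land\psi\models\phi\lor\psi$ separately, fixing an arbitrary team $T$ in each case. The only ingredients are the semantic clauses for $\lor$ and $\land$ and the upward closure of $\phi$ and $\psi$.

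For the direction from $\phi\lor\psi$ to $\phi\land\psi$, suppose $T\models\phi\lor\psi$. Then there are subteams $T_1,T_2\subseteq T$ with $T_1\cup T_2=T$, $T_1\models\phi$ and $T_2\models\psi$. Since $T\supseteq T_1$ and $\phi$ is upward closed, $T\models\phi$. Likewise $T\supseteq T_2$ and upward closure of $\psi$ give $T\models\psi$. Hence $T\models\phi\land\psi$. Note that no case split on emptiness is needed: unlike the quasi setting of \Cref{qu glor equiv lor}, upward closure places no exception on the empty team. So even if $T_1$ or $T_2$ is empty, the fact that it satisfies the formula still propagates upward to $T$.

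For the converse, suppose $T\models\phi\land\psi$, so $T\models\phi$ and $T\models\psi$. Choose the trivial split $T_1=T_2=T$. Then $T_1\cup T_2=T$, $T_1\models\phi$ and $T_2\models\psi$, hence $T\models\phi\lor\psi$. This step uses only that the clause for $\lor$ allows overlapping subteams; it would fail for the strict disjunction $\lor_s$.

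Neither direction poses a real obstacle. The one point to check is that the argument never relies on the empty team property, since upward closed formulas (for instance $\top\subseteqq p$) need not be satisfied by $\emptyset$. Both steps above are uniform in $T$, including $T=\emptyset$, so this check is satisfied.
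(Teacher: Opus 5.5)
Your proof is correct and follows the paper's argument: for the left-to-right direction the paper also lifts $T_1\models\phi$ and $T_2\models\psi$ to $T=T_1\cup T_2$ by upward closure. The converse, which the paper dismisses as trivial, is exactly your split $T=T\cup T$.
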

\begin{proof}
The right-to-left direction is trivial, so we show $\phi\lor\psi\models \phi\land\psi$. If $T\models\phi\lor\psi$, then there are subteams $T_1$ and $T_2$ of $T$ such that $T_1\cup T_2=T$, $T_1\models\phi$ and $T_2\models\psi$. By upward closure of the formulas, $T_1\cup T_2\models\phi$ and $T_1\cup T_2\models\psi$, hence $T_1\cup T_2=T\models\phi\land\psi$. 
\end{proof}

We end with a known fact, the split-disjunction and conjunction distribute over the global disjunction unconditionally.

\begin{fact}\label[fact]{fact glor distr}
    Let $\phi,\psi$ and $\chi$ be any formulas. Then $\psi\land(\chi\glor\theta)\equiv (\psi\land\chi)\glor(\phi\land\theta)$ and   $\psi\lor(\chi\glor\theta)\equiv (\psi\lor\chi)\glor(\phi\lor\theta)$.
\end{fact}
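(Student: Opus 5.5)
The statement has typos: the variables should be $\psi,\chi,\theta$, with $\phi$ in the right-hand sides read as $\psi$. So the claims are
\[
\psi\land(\chi\glor\theta)\equiv(\psi\land\chi)\glor(\psi\land\theta)
\quad\text{and}\quad
\psi\lor(\chi\glor\theta)\equiv(\psi\lor\chi)\glor(\psi\lor\theta).
\]
I will argue directly from the semantic clauses, fixing an arbitrary team $T$ and showing both sides hold of $T$ under the same conditions. No closure property of the formulas is needed, since $\land$ and $\glor$ are evaluated pointwise on $T$ and $\lor$ only quantifies over splittings of $T$.

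For conjunction the argument is a chain of equivalences. $T\models\psi\land(\chi\glor\theta)$ holds iff $T\models\psi$ and ($T\models\chi$ or $T\models\theta$). By the Boolean distributivity of ``and'' over ``or'' at the meta-level, this holds iff ($T\models\psi$ and $T\models\chi$) or ($T\models\psi$ and $T\models\theta$). That is exactly $T\models(\psi\land\chi)\glor(\psi\land\theta)$.

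For split disjunction I will prove the two directions separately.

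\begin{itemize}
\item \emph{Left to right.} Suppose $T\models\psi\lor(\chi\glor\theta)$. Then there are $T_1,T_2\subseteq T$ with $T_1\cup T_2=T$, $T_1\models\psi$ and $T_2\models\chi\glor\theta$. So $T_2\models\chi$ or $T_2\models\theta$. In the first case the same splitting witnesses $T\models\psi\lor\chi$; in the second it witnesses $T\models\psi\lor\theta$. Either way $T$ satisfies the global disjunction.
\item \emph{Right to left.} Suppose, say, $T\models\psi\lor\chi$ via $T=T_1\cup T_2$ with $T_1\models\psi$ and $T_2\models\chi$. Then $T_2\models\chi\glor\theta$, so the same splitting witnesses $T\models\psi\lor(\chi\glor\theta)$. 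The $\theta$ case is symmetric.
\end{itemize}

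The only point needing care is that the witnessing splitting is reused unchanged in both directions. This works because $\glor$ is evaluated on the single subteam $T_2$ without further splitting, which is why distributivity holds unconditionally. There is no real obstacle beyond this and correcting the variable names in the statement.
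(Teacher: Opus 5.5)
Your proof is correct, and your reading of the typo is the intended one: the $\phi$ on the right-hand sides should be $\psi$, as the paper's later uses of $\land\glor$Distr and $\lor\glor$Distr confirm, and the literal statement with an independent $\phi$ fails in general. The paper states this as a known fact without proof, and your direct semantic check is exactly the standard argument it relies on: meta-level distributivity for $\land$, and reusing the same splitting $T=T_1\cup T_2$ in both directions for $\lor$.
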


In the following four sections, we study each of the four logics in detail by showing their exact expressive power as well as introducing their respective sound and complete natural deduction proof systems.

To obtain the expressive completeness results, we define a normal form for each logic with which each desired property can be expressed. The normal forms also play an important role in proving the completeness theorem for the proof systems we introduce. For the (quasi) upward closed logics, the system is obtained by adapting the one for propositional inclusion logic \cite{yang_propositional_2022}. For the (quasi) downward closed logics, we adapt the system for $\PLglor$ from \cite{Yang16}. 

\subsection{Quasi upward closed logic}

We show that the logic $\mathcal{L}_{qu}$ is expressively complete for all quasi upward closed team properties with the empty team and provide a complete axiomatization. 

Recall the syntax of  $\mathcal{L}_{qu}$. \[\phi::= \bot \mid \mathsf{x}\subseteq \mathsf{p} \mid (\phi\land\phi)\mid (\phi\glor\phi).\] 

Define $\top:=\sempty\subseteq\sempty$, where $\sempty$ denotes the empty sequence.


We aim to show that for any quasi upward closed team property $\mathcal{C}$, there is a formula in $\mathcal{L}_{qu}$ that defines it. 
We obtain this result by using the logic's \emph{normal form}.  

Let the sequence $\mathsf{p}$ contain all propositional symbols in $\mathbb{P}$ and recall that we assume $\mathbb{P}$ to be finite.  For teams $T$, define $$\psi^{\prime}_T:=\bigwedge_{v\in T} \mathsf{x}^v\subseteq \mathsf{p},$$ with $\mathsf{x}^v={v(1)}\dots {v(n)}$, where ${v(i)}=\top$ if $v(p_i)=1$ and $\bot$ otherwise. The formula $\psi^{\prime}_T$ essentially appears as a subformula in the normal form for $\PLprim$ in \cite{yang_propositional_2022}. 
Note that since we define $\mathsf{p}$ to include all propositional symbols in $\mathbb{P}$, there is one unique valuation $v'$ over $\mathbb{P}$ for which $\{v'\}\models \mathsf{x}^v\subseteq \mathsf{p}$, hence $v'=v$. Furthermore, for nonempty teams $T$,  $T\models \mathsf{x}^v\subseteq \mathsf{p}$ if and only if $v\in T$.

We examine some basic properties of the $\psi^{\prime}_T$-formulas in the next lemma, where \cref{wuc lm 1 item} is due to \cite{yang_propositional_2022}. 

\begin{lemma} \label[lemma]{wuc lm 1}
Let $\mathcal{C}$ and $\mathcal{D}$ be nonempty team properties not containing the empty team. 
\begin{enumerate}[label =$($\roman*$)$]

      
    \item  $S\models \psi^{\prime}_T$ iff $S=\emptyset$ or $S\supseteq T$. \label{wuc lm 1 item}

 \item $\Glor_{S\in\mathcal{D}}\psi^{\prime}_S\models \Glor_{T\in\mathcal{C}}\psi^{\prime}_T$ iff for each $S\in\mathcal{D}$ there is some $T\in\mathcal{C}$ such that $S\supseteq T$. \label{wuc lm 2 item}
\end{enumerate} 
\end{lemma}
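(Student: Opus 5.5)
For \cref{wuc lm 1 item}, I will unfold the semantics of the conjunction and use the observation just made before the lemma: for a nonempty team $S$, $S\models \mathsf{x}^v\subseteq\mathsf{p}$ iff $v\in S$. This holds because $\mathsf{p}$ lists every symbol of $\mathbb{P}$, so the only valuation agreeing with $\mathsf{x}^v$ on $\mathsf{p}$ is $v$ itself. The empty team satisfies every primitive inclusion atom, so $\emptyset\models\psi'_T$.

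If $S\neq\emptyset$, then $S\models\psi'_T$ iff $S\models\mathsf{x}^v\subseteq\mathsf{p}$ for every $v\in T$. By the observation, this holds iff $v\in S$ for all $v\in T$, that is, iff $S\supseteq T$. When $T=\emptyset$, the conjunction is $\bigwedge\emptyset=\top$, and the equivalence still holds trivially.

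For \cref{wuc lm 2 item}, I will use \cref{wuc lm 1 item} to compute the team properties. Unfolding the semantics of $\Glor$ gives
\[\Big\lVert\Glor_{T\in\mathcal{C}}\psi'_T\Big\rVert=\{\emptyset\}\cup\{S'\mid S'\supseteq T\text{ for some }T\in\mathcal{C}\},\]
using that $\mathcal{C}$ is nonempty so the disjunction is not empty. The same description holds for $\mathcal{D}$.

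For the left-to-right direction, fix $S\in\mathcal{D}$. By \cref{wuc lm 1 item}, $S\models\psi'_S$, hence $S$ satisfies the left disjunction and therefore the right one. So $S=\emptyset$ or $S\supseteq T$ for some $T\in\mathcal{C}$. The first case is excluded because $\emptyset\notin\mathcal{D}$.

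For the right-to-left direction, let $S'$ satisfy the left side. If $S'=\emptyset$, it satisfies the right side by the empty team property. Otherwise $S'\supseteq S$ for some $S\in\mathcal{D}$. The hypothesis gives $T\in\mathcal{C}$ with $S\supseteq T$, so $S'\supseteq T$ by transitivity, and then $S'\models\psi'_T$ by \cref{wuc lm 1 item}.

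The only delicate point is in the left-to-right direction of \cref{wuc lm 2 item}. There we genuinely need $\emptyset\notin\mathcal{D}$ to discard the case $S=\emptyset$, since the empty team satisfies every $\psi'_T$ and would otherwise give no information. We also need $\mathcal{C}\neq\emptyset$ so that the right-hand $\Glor$ is a well-defined formula. Everything else is routine unfolding of definitions.
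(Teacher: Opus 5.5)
Your proof is correct and follows essentially the same route as the paper. Part (i) unfolds the conjunction and uses that, for nonempty $S$, $S\models\mathsf{x}^v\subseteq\mathsf{p}$ iff $v\in S$; part (ii) evaluates the entailment at $S\in\mathcal{D}$ (via $S\models\psi^{\prime}_S$ and $\emptyset\notin\mathcal{D}$) and, conversely, chains $S'\supseteq S\supseteq T$. The differences are minor: you appeal to (i) directly where the paper invokes quasi upward closure (so you never need $T\neq\emptyset$), and your left-to-right step in (ii) is stated more cleanly than the paper's version.
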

\begin{proof}
\begin{enumerate}
    \item[\ref{wuc lm 1 item}] 
If $S=\emptyset$, the claim follows by the empty team property. If $T=\emptyset$, both sides of the equivalence are trivially satisfied, since $\bigwedge\emptyset=\top$. 

   Suppose $S\neq\emptyset$ and let $S\models \psi^{\prime}_T$, where  $\psi^{\prime}_T=\bigwedge_{v\in T} \mathsf{x}^v\subseteq \mathsf{p}$. Then for each valuation $v\in T$,  $S\models \mathsf{x}^v\subseteq \mathsf{p}$, which implies that $v\in S$, hence $S\supseteq T$. 
For the other direction, assuming that $S\supseteq T$, we clearly have that $T\models \psi^{\prime}_T$, hence $S\models \psi^{\prime}_T$ follows by quasi upward closure of the formula.

 \item[\ref{wuc lm 2 item}] Let $\mathcal{C}$ and $\mathcal{D}$ be as stated. Suppose first that $\Glor_{S\in\mathcal{D}}\psi^{\prime}_S\models \Glor_{T\in\mathcal{C}}\psi^{\prime}_T$. Then for each $T\in\mathcal{D}$ there is some $S\in\mathcal{C}$ such that $\psi^{\prime}_S\models\psi^{\prime}_T$. Since $S\models \psi^{\prime}_S$, also $S\models\psi^{\prime}_T$ and $S\supseteq T$ follows by \cref{wuc lm 1 item}. 
 
 For the other direction, let $S'\models \Glor_{S\in\mathcal{D}}\psi^{\prime}_S$. If $S'=\emptyset$, the claim follows by the empty team property, so suppose that $S'$ is nonempty. Now there is some $S\in\mathcal{D}$ such that $S'\models\psi^{\prime}_S$, which by \cref{wuc lm 1 item} entails that $S'\supseteq S$. By assumption, there is some $T\in\mathcal{C}$ such that $S'\supseteq S\supseteq T$. Since $T\models\psi^{\prime}_T$, and $T\neq\emptyset$, it follows by weak upward closure of the formula that $S'\models\psi^{\prime}_T$, hence  $ S'\models\Glor_{T\in\mathcal{C}}\psi^{\prime}_T$.

\end{enumerate}
\end{proof}

The expressive completeness result is now obtained by considering the normal form $$\Psi^{\prime}_\mathcal{C}:=\Glor_{T\in\mathcal{C}}\psi^{\prime}_T,$$ where $\mathcal{C}$ is a nonempty team property without the empty team. We seperately define $\Psi^{\prime}_{\{\emptyset\}}:=\bot$. 

\begin{theorem}
    $\mathcal{L}_{qu}$ is expressively complete for all quasi upward closed team properties $\mathcal{C}$.
\end{theorem}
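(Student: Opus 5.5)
The plan is to show that every quasi upward closed property $\mathcal{C}$ is defined by the normal form, namely $\lVert\Psi^{\prime}_{\mathcal{C}^-}\rVert=\mathcal{C}$ where $\mathcal{C}^-:=\mathcal{C}\setminus\{\emptyset\}$. The reverse inclusion, that every formula of $\mathcal{L}_{qu}$ defines a quasi upward closed property containing $\emptyset$, is already given by the Proposition establishing the closure properties of the four logics. So only the direction from properties to formulas needs work.

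\textbf{Case split.} Since $\mathcal{C}$ is quasi upward closed, $\emptyset\in\mathcal{C}$. Either $\mathcal{C}^-$ is empty or it is not.

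\textbf{Case $\mathcal{C}=\{\emptyset\}$.} We use $\Psi^{\prime}_{\{\emptyset\}}=\bot$. By the semantics of $\bot$ we get $\lVert\bot\rVert=\{\emptyset\}$.

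\textbf{Case $\mathcal{C}^-\neq\emptyset$.} We take $\Psi^{\prime}_{\mathcal{C}^-}=\Glor_{T\in\mathcal{C}^-}\psi^{\prime}_T$. This is a finite global disjunction because $\mathbb{P}$ is finite, so it is a legitimate formula of $\mathcal{L}_{qu}$.

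\textbf{Inclusion $\mathcal{C}\subseteq\lVert\Psi^{\prime}_{\mathcal{C}^-}\rVert$.} The empty team satisfies $\Psi^{\prime}_{\mathcal{C}^-}$ by the empty team property. A nonempty $S\in\mathcal{C}$ belongs to $\mathcal{C}^-$, and $S\models\psi^{\prime}_S$ by \cref{wuc lm 1 item} of \cref{wuc lm 1}, taking $S\supseteq S$. Hence $S$ satisfies the disjunction.

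\textbf{Inclusion $\lVert\Psi^{\prime}_{\mathcal{C}^-}\rVert\subseteq\mathcal{C}$.} Let $S\models\Psi^{\prime}_{\mathcal{C}^-}$. If $S=\emptyset$, then $S\in\mathcal{C}$ directly. Otherwise $S\models\psi^{\prime}_T$ for some $T\in\mathcal{C}^-$. By \cref{wuc lm 1 item} of \cref{wuc lm 1}, since $S\neq\emptyset$, we get $S\supseteq T$. Now $\mathcal{C}^-$ is upward closed by the definition of quasi upward closure, and $T\in\mathcal{C}^-$, so $S\in\mathcal{C}^-\subseteq\mathcal{C}$.

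\textbf{Main obstacle.} The only delicate point is handling the empty team. Lemma~\ref{wuc lm 1} is stated for properties without $\emptyset$, which is why we pass to $\mathcal{C}^-$. The degenerate case $\mathcal{C}^-=\emptyset$ must be treated separately via $\bot$, since an empty global disjunction is not available in the syntax.

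One further check: the nonempty teams $T$ contribute atoms $\mathsf{x}^v\subseteq\mathsf{p}$, and these respect the no-repetition restriction because $\mathsf{p}$ lists each symbol of $\mathbb{P}$ exactly once. If $\emptyset\in\mathcal{C}^-$ were possible, $\psi^{\prime}_\emptyset=\top$ would appear; but $\mathcal{C}^-$ excludes $\emptyset$ by definition, so this does not arise.
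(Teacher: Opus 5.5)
Your proof is correct and follows essentially the same route as the paper: $\bot$ for $\mathcal{C}=\{\emptyset\}$, and otherwise $\Glor_{T\in\mathcal{C}\setminus\{\emptyset\}}\psi^{\prime}_T$, using item (i) of \Cref{wuc lm 1} together with upward closure of $\mathcal{C}\setminus\{\emptyset\}$. You split the paper's chain of equivalences into two inclusions, and you correctly invoke item (i) at the step where the paper's text cites item (ii).
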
 
\begin{proof}



     Every $\phi\in\mathcal{L}_{qu}$ defines a nonempty quasi upward closed property, so it remains to show that any nonempty quasi upward closed property $\mathcal{C}$ over $\mathbb{P}$ is definable by some formula $\phi\in \mathcal{L}_{qu}$, i.e.,   
$\mathcal{C} = \lVert\phi\rVert$.  

If $\mathcal{C}=\{\emptyset\}$, then the formula $\bot$ clearly captures the team property. So suppose that $\{\emptyset\}\subsetneq\mathcal{C}$. Now 
for any team $S$ over $\mathbb{P}$, we have 
\begin{align*}
  S \models  \Glor_{T \in \mathcal{C}\setminus\{\emptyset\}} \psi^{\prime}_T \iff & S\models \psi^{\prime}_T \text{ for some } T\in \mathcal{C}\setminus\{\emptyset\}  \\
    \iff & S=\emptyset \text{ or } S\supseteq T \text{ for some } T\in \mathcal{C}\setminus\{\emptyset\} \\
  \iff & S \in \mathcal{C},
\end{align*}
where the second equivalence is by  \Cref{wuc lm 1} \cref{wuc lm 2 item}, and the last equivalence is by quasi upward closure of $\mathcal{C}$. Thus $\mathcal{C}=\lVert\Glor_{T \in \mathcal{C}\setminus\{\emptyset\}} \psi^{\prime}_{T}\rVert$, as desired.
\end{proof}




We now present a proof system for the logic $\mathcal{L}_{qu}$ in \Cref{rules:wuc_logic}.

\begin{table}[ht]\centering \small
  \renewcommand*{\arraystretch}{3.8}
\begin{tabular*}{\linewidth}{@{\extracolsep{\fill}}|ccc|}
\hline
     \AxiomC{} 
     \noLine     
     \UnaryInfC{$\bot$}
     \RightLabel{$\bot$E}
\UnaryInfC{$\phi$}
\DisplayProof \hspace{.2cm}

     \AxiomC{} 
     \noLine     
     \UnaryInfC{}
     \RightLabel{$\top$I}
\UnaryInfC{$\sempty\subseteq\sempty$}
\DisplayProof

&
\AxiomC{}
\noLine
\UnaryInfC{$\phi$}
\AxiomC{}
\noLine
\UnaryInfC{$\psi$}
\RightLabel{$\land$I}
\BinaryInfC{$\phi\land\psi$}
\DisplayProof
&
\AxiomC{}
\noLine
\UnaryInfC{$\phi \land \psi$}
\RightLabel{$\land$E}
\UnaryInfC{$\phi$}
\DisplayProof
\hspace{.5cm}

\AxiomC{}
\noLine
\UnaryInfC{$\phi \land \psi$}
\RightLabel{$\land$E}
\UnaryInfC{$\psi$}
\DisplayProof 
 \\ [2ex]

\AxiomC{}
\noLine
\UnaryInfC{$\phi$}
\RightLabel{$\glor$I}
\UnaryInfC{$\phi\glor\psi$}
\DisplayProof
& 
\AxiomC{}
\noLine
\UnaryInfC{$\psi$}
\RightLabel{$\glor$I}
\UnaryInfC{$\phi\glor\psi$}
\DisplayProof &  \AxiomC{}
\noLine
\UnaryInfC{$\phi \glor \psi$}
\noLine
\AxiomC{[$\phi$]}
\noLine
\UnaryInfC{\vdots}
\noLine
\UnaryInfC{$\chi$}
\AxiomC{[$\psi$]}
\noLine
\UnaryInfC{\vdots}
\noLine
\UnaryInfC{$\chi$}
\RightLabel{$\glor$E}
\TrinaryInfC{$\chi$}
\DisplayProof\\ [4ex] 

 \AxiomC{}
     \noLine  
     \UnaryInfC{$\mathsf{x}y\subseteq\mathsf{p}q$}
     \RightLabel{$\subseteq$Proj}
\UnaryInfC{$\mathsf{x}\subseteq\mathsf{p}$}
\DisplayProof
&

 \AxiomC{}
     \noLine  
     \UnaryInfC{$\mathsf{x}\mathsf{y}\mathsf{z}\subseteq\mathsf{u}\mathsf{v}\mathsf{w}$}
     \RightLabel{$\subseteq$Perm (1)}
\UnaryInfC{$\mathsf{x}\mathsf{z}\mathsf{y}\subseteq\mathsf{u}\mathsf{w}\mathsf{v}$} 
 \DisplayProof
& 
\AxiomC{}
 \noLine
 \UnaryInfC{$\mathsf{x}\subseteq\mathsf{p}$}

 \AxiomC{$[\mathsf{x}\top\subseteq\mathsf{p}q]$}
 \noLine
 \UnaryInfC{\vdots}
 \noLine
 \UnaryInfC{$\chi$} 

 \AxiomC{$[\mathsf{x}\bot\subseteq\mathsf{p}q]$}
 \noLine
 \UnaryInfC{\vdots}
 \noLine
 \UnaryInfC{$\chi$} 
 \RightLabel{$\subseteq$Ext}
 \TrinaryInfC{$\chi$}
 \DisplayProof
 \\ [4ex] 
 \hline
 
\multicolumn{3}{|c|}{(1) Provided that $|y|=|v|$ and $|z|=|w|$.}
 \\ 
\hline
\end{tabular*}
 \caption{System for $\mathcal{L}_{qu}$.}
\label{rules:wuc_logic}
\end{table}  

The system is a variant of the one for $\PLprim$ in \cite{yang_propositional_2022} restricted to the quasi upward closed fragment. We include the rules $\bot$E and $\top$I, where we recall that $\sempty\subseteq\sempty$ is a tautology. The rules $\land$I and $\land$E for conjunction and the rules $\glor$I and $\glor$E for the global disjunction are standard. Recall that in the quasi upward closed setting, we could as well replace $\glor$ with $\lor$, and that the obtained $\lor$E rule would be without any added restrictions on the formulas; this is unlike the union closed setting of $\PLprim$, where we must require the undicharged assumptions to be from the downward closed fragment of the logic. The rules we add for the primitive inclusion atom include the standard $\subseteq$Proj and $\subseteq$Perm rules. We can omit the weakening and transitivity rules that would allow for the derivations  $\mathsf{x}y\subseteq\mathsf{p}q\vdash \mathsf{x}yy\subseteq\mathsf{p}qq$, and $\mathsf{a}\subseteq\mathsf{b}, \mathsf{b}\subseteq\mathsf{c}\vdash \mathsf{a}\subseteq\mathsf{c}$, since the shape of primitive inclusion atoms in this logic syntactically prevents such interactions. Lastly, the rule $\subseteq$Ext is novel and captures that if we know that a primitive inclusion atom $\mathsf{x}\subseteq\mathsf{p}$ is satisfied in a nonempty team, then there is a valuation $v$ in the team that satisfies it, and either $v(q)=1$ or $v(q)=0$. If we can derive $\chi$ in both cases, then $\chi$ follows.

\begin{theorem} \label{wuc_soundness}
    The rules in \Cref{rules:wuc_logic}
 are sound for quasi upward closed formulas. \end{theorem}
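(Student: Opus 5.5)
We prove soundness in the usual way: by induction on the construction of a derivation $\Gamma\vdash\phi$ in the system of \Cref{rules:wuc_logic}, showing that $\Gamma\models\phi$. In the induction hypothesis we treat the undischarged assumptions as a set of formulas $\Gamma$, so that every rule application must preserve semantic consequence. Fix a team $T$ satisfying all formulas in $\Gamma$; for each rule we show that $T$ satisfies the conclusion.

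\textbf{Standard rules.} These are handled as in any team-based logic.

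For $\bot$E, $T\models\bot$ forces $T=\emptyset$. Every formula of $\mathcal{L}_{qu}$ has the empty team property, by the proposition that $\mathcal{L}_{qu}$ is quasi upward closed, so $T\models\phi$. This is the only place where the quasi upward closure of the formulas is used in an essential way.

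For $\top$I, $\sempty\subseteq\sempty$ holds in every team, since the empty tuple matches the empty tuple.

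The rules $\land$I and $\land$E are immediate from the conjunction clause, and $\glor$I is immediate from the global disjunction clause. For $\glor$E, $T\models\phi\glor\psi$ gives $T\models\phi$ or $T\models\psi$. In either case $T$ satisfies $\Gamma$ together with the relevant discharged assumption, so the induction hypothesis for the corresponding subderivation yields $T\models\chi$.

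\textbf{Inclusion rules.} For $\subseteq$Proj, we may assume $T\neq\emptyset$, since otherwise the conclusion holds trivially. Then $T\models\mathsf{x}y\subseteq\mathsf{p}q$ provides some $v\in T$ with $v(\mathsf{p}q)=v(\mathsf{x}y)$. In particular $v(\mathsf{p})=v(\mathsf{x})$, which witnesses $\mathsf{x}\subseteq\mathsf{p}$.

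For $\subseteq$Perm, a witness for $\mathsf{x}\mathsf{y}\mathsf{z}\subseteq\mathsf{u}\mathsf{v}\mathsf{w}$ is also a witness for the permuted atom. The length conditions $|y|=|v|$ and $|z|=|w|$ ensure that the blocks stay aligned after permutation.

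Since the rules are stated for general sequences, it is cleanest to argue directly from the clause for $\mathsf{a}\subseteq\mathsf{b}$. We must also observe that the conclusions stay within the syntax of $\mathcal{L}_{qu}$, in particular that no repeated symbols are introduced. Projection and permutation clearly preserve this property.

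\textbf{The rule $\subseteq$Ext.} This is the main (though still short) step. Assume $T\models\mathsf{x}\subseteq\mathsf{p}$, and assume both subderivations are sound, with discharged assumptions $\mathsf{x}\top\subseteq\mathsf{p}q$ and $\mathsf{x}\bot\subseteq\mathsf{p}q$ respectively.

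If $T=\emptyset$, then $T\models\chi$ by the empty team property.

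Otherwise choose $v\in T$ with $v(\mathsf{p})=v(\mathsf{x})$. If $v(q)=1$, then $v$ witnesses $T\models\mathsf{x}\top\subseteq\mathsf{p}q$. If $v(q)=0$, it witnesses $T\models\mathsf{x}\bot\subseteq\mathsf{p}q$. So $T$ satisfies $\Gamma$ together with one of the two discharged assumptions, and the induction hypothesis gives $T\models\chi$.

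A subtle point is the case $q\in\mathsf{p}$, where the extended atom would contain a repeated symbol and fall outside the syntax. Either the rule implicitly requires $q\notin\mathsf{p}$, or we note that the semantic argument above goes through verbatim for general inclusion atoms. I would add a remark to that effect.
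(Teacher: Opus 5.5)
Your proof is correct and follows the same approach as the paper. The paper writes out only the $\subseteq$Ext case and treats the other rules as routine; there it disposes of the empty team via the empty team property of $\chi$, otherwise picks $v\in T$ with $v(\mathsf{p})=v(\mathsf{x})$, and splits on $v(q)$, exactly as you do. The paper simply stipulates $q\notin\mathsf{p}$, and your remark that the argument also works verbatim when $q\in\mathsf{p}$ is correct.
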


 \begin{proof}
We show soundness of the rule $\subseteq$Ext. Since $\chi$ has the empty team property, soundness is trivial for the empty team, so suppose that $T\neq \emptyset$. Let $T\models\mathsf{x}\subseteq\mathsf{p}$ and $T\models \Gamma$. Assume further that $\Gamma, \mathsf{x}\top\subseteq\mathsf{p}q\models \chi$ and $\Gamma, \mathsf{x}\bot\subseteq\mathsf{p}q\models \chi$. Recall that we assume that $q$ does not appear in $\mathsf{p}$. Since $T\models\mathsf{x}\subseteq\mathsf{p}$, there is some valuation $v$ in $T$ such that $\{v\}\models \mathsf{x}\subseteq\mathsf{p}$. If $v(q)=1$, then clearly $\{v\}\models \mathsf{x}\top\subseteq\mathsf{p}q$ so $T\models \mathsf{x}\top\subseteq\mathsf{p}q$ and we can conlude that $T\models \chi$. Otherwise,  $v(q)=0$ and $\{v\}\models \mathsf{x}\bot\subseteq\mathsf{p}q$ and we can again conlude that $T\models \chi$.
 \end{proof}

As is commonly done for propositional team-based logics, we show that each formula is provably equivalent to some formula in the logic's normal form.

\begin{lemma} \label[lemma]{quc derivable nf}
    For $\phi\in\mathcal{L}_{qu}$, there is a team property $\mathcal{C}$, that either is $\{\emptyset\}$ or does not contain the empty set, such that $\phi\dashv\vdash \Psi^{\prime}_\mathcal{C}=\Glor_{T\in\mathcal{C}}\psi^{\prime}_T$.
\end{lemma}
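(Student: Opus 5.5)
We prove the statement by induction on the complexity of $\phi\in\mathcal{L}_{qu}$, using only the rules of \Cref{rules:wuc_logic}. Throughout, $\mathsf{p}$ lists all of $\mathbb{P}$.

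\textbf{Base case $\bot$.} Here $\mathcal{C}=\{\emptyset\}$ and $\Psi^{\prime}_{\{\emptyset\}}=\bot$ by definition, so there is nothing to prove.

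\textbf{Base case: a primitive atom $\mathsf{x}\subseteq\mathsf{q}$.}
\begin{itemize}
\item Let $\mathsf{r}$ enumerate the symbols of $\mathbb{P}$ not occurring in $\mathsf{q}$. Apply $\subseteq$Ext once for each symbol of $\mathsf{r}$. This derives the global disjunction, over all extensions $\mathsf{y}\in\{\top,\bot\}^{|\mathsf{r}|}$, of $\mathsf{x}\mathsf{y}\subseteq\mathsf{q}\mathsf{r}$, introducing each disjunct with $\glor$I inside the branches.
\item Use $\subseteq$Perm to reorder each disjunct to $\mathsf{x}^v\subseteq\mathsf{p}$, where $v$ is the valuation determined by $\mathsf{x}\mathsf{y}$. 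Each such disjunct is exactly $\psi^{\prime}_{\{v\}}$. We therefore obtain $\Psi^{\prime}_\mathcal{C}$ with $\mathcal{C}=\{\{v\}\mid v(\mathsf{q})=v(\mathsf{x})\}$.
\item For the converse, each $\mathsf{x}^v\subseteq\mathsf{p}$ yields $\mathsf{x}\subseteq\mathsf{q}$ by $\subseteq$Perm and $\subseteq$Proj, and $\glor$E closes the argument.
\item If $\mathsf{q}$ is empty, the atom is $\top$, and $\mathsf{r}$ is all of $\mathbb{P}$. The same argument produces $\Glor_{v}\psi^{\prime}_{\{v\}}$, and the converse uses $\top$I.
\end{itemize}
No inclusion atom's team property contains only $\emptyset$, so each such $\mathcal{C}$ is nonempty and avoids $\emptyset$.

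\textbf{Inductive case $\glor$.} Take the union $\mathcal{C}\cup\mathcal{D}$ of the two families, using $\glor$I and $\glor$E in the obvious way. If one side is $\{\emptyset\}$, that disjunct is $\bot$. By $\bot$E, $\bot\vdash\Psi^{\prime}_{\mathcal{C}}$ for any $\mathcal{C}$, so $\bot\glor\Psi^{\prime}_\mathcal{C}\dashv\vdash\Psi^{\prime}_\mathcal{C}$ and we simply take $\mathcal{C}$.

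\textbf{Inductive case $\land$.} This is the main step. If either side is $\bot$, the conjunction is interderivable with $\bot$. Otherwise, by $\land$E, $\glor$E and $\glor$I (the distributivity of \Cref{fact glor distr}, which holds syntactically here), we get
\[\Psi^{\prime}_\mathcal{C}\land\Psi^{\prime}_\mathcal{D}\dashv\vdash\Glor_{T\in\mathcal{C},S\in\mathcal{D}}(\psi^{\prime}_T\land\psi^{\prime}_S).\]
It then remains to show $\psi^{\prime}_T\land\psi^{\prime}_S\dashv\vdash\psi^{\prime}_{T\cup S}$. Both sides are, up to repetition and reassociation of conjuncts, the conjunction of the atoms $\mathsf{x}^v\subseteq\mathsf{p}$ for $v\in T\cup S$. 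The interderivability therefore needs only $\land$I and $\land$E. The resulting family $\{T\cup S\mid T\in\mathcal{C},S\in\mathcal{D}\}$ contains no empty team.

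The obstacle I expect is the atomic case, and specifically the bookkeeping of $\subseteq$Ext there. First, it must be applied with fresh symbols, which is guaranteed because $\mathsf{r}$ avoids $\mathsf{q}$. Second, $\subseteq$Perm must be used to bring the extended atom into the canonical order of $\mathsf{p}$. Third, the trivial atom $\top$ has to be handled separately. Everything else reduces to the structural rules for $\land$ and $\glor$.
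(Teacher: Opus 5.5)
Your proposal is correct and follows essentially the same route as the paper. The paper also argues by induction, using iterated $\subseteq$Ext with $\subseteq$Perm and $\glor$I for atoms, $\glor$I/$\glor$E for $\glor$, and distribution of $\land$ over $\glor$ for conjunction, with the $\bot$ case handled separately; the only difference is that you pick the singleton family $\{\{v\}\mid v(\mathsf{q})=v(\mathsf{x})\}$ and the pairwise unions $\{T\cup S\}$ where the paper uses the upward-closed families $\lVert\mathsf{y}\subseteq\mathsf{q}\rVert\setminus\{\emptyset\}$ and $\{S\mid S\supseteq T_1\cup T_2\}$, which only saves you a few $\land$E weakening steps.
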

\begin{proof}
    \begin{enumerate}[label=\textbf{-}]
        \item If $\phi=\bot$, then let $\mathcal{C}=\{\emptyset\}$. Now $\Psi^{\prime}_\mathcal{C}=\bot$ and the derivation is trivial.

\item  If $\phi=\mathsf{y}\subseteq\mathsf{q}$, let $\mathcal{C}=\lVert\mathsf{y}\subseteq \mathsf{q}\rVert\setminus\{\emptyset\}$. We show first that $\Psi^{\prime}_{\mathcal{C}}\vdash \mathsf{y}\subseteq \mathsf{q}$. By $\glor$E, it suffices to show that $\psi^{\prime}_T\vdash \mathsf{y}\subseteq \mathsf{q}$ for each $T\in\mathcal{C}$.  Let $T\in \mathcal{C}$. Since $T\neq\emptyset$, there is a valuation $v\in T$ such that $v(\mathsf{q})=v(\mathsf{y})$, hence $\psi^{\prime}_T\vdash \mathsf{y}\subseteq \mathsf{q}$ by $\land$E, $\subseteq$Perm and $\subseteq$Proj. 

For the other direction, we show that 
$\mathsf{y}\subseteq \mathsf{q}\vdash \Psi^{\prime}_{\mathcal{C}}$. By $\subseteq$Ext, $\glor$I and $\subseteq$Perm, we have $\mathsf{y}\subseteq \mathsf{q}\vdash \Glor_{\mathsf{x}\in\{\top,\bot\}^{|r|}} \mathsf{y}\mathsf{x}\subseteq \mathsf{q}\mathsf{r}$, where $\mathsf{q}\mathsf{r}=\mathsf{p}$. We note that each disjunct and  $\mathsf{y}\mathsf{x}$ correspond to a unique valuation for which $\{v\}\in\mathcal{C}$, so $\Glor_{\mathsf{x}\in\{\top,\bot\}^{|r|}} \mathsf{y}\mathsf{x}\subseteq \mathsf{q}\mathsf{r}=\Glor_{\{v\}\in\mathcal{C}}\psi^{\prime}_{\{v\}}\vdash\Glor_{T\in\mathcal{C}}\psi^{\prime}_{T}=\Psi^{\prime}_{\mathcal{C}}$ follows by $\glor$I. 

\item[(IH)] Let $\mathcal{D}_1,\mathcal{D}_2$ be nonempty team properties such that $\phi_1\dashv\vdash \Psi^{\prime}_{\mathcal{D}_1}$ and $\phi_2\dashv\vdash \Psi^{\prime}_{\mathcal{D}_2}$. Assume additionally that each of $\mathcal{D}_1,\mathcal{D}_2$ is either $\{\emptyset\}$ or does not contain the empty set.
        
        \item  Let $\phi=\phi_1\glor\phi_2$. If $D_2=\{\emptyset\}$, then we derive $\Psi^{\prime}_{\mathcal{D}_1}\glor\Psi^{\prime}_{\{\emptyset\}}=\Psi^{\prime}_{\mathcal{D}_1}\glor\bot\vdashv\Psi^{\prime}_{\mathcal{D}_1}$ by $\glor$I, $\glor$E and $\bot$E. The derivation when ${\mathcal{D}_1}=\{\emptyset\}$ is similar. So suppose that neither team property contains the empty team. By the induction hypothesis, $\glor$E and $\glor$I, we have $\phi_1\glor \phi_2\dashv\vdash \Psi^{\prime}_{\mathcal{D}_1}\glor\Psi^{\prime}_{\mathcal{D}_1}= \Psi^{\prime}_{\mathcal{D}_1\cup \mathcal{D}_2}$.
        
        \item  Let $\phi=\phi_1\land\phi_2$. We naturally let $\mathcal{C}=\mathcal{D}_1\cap\mathcal{D}_2$, and observe that by assumption, this intersection either is $\{\emptyset\}$, or does not contain the empty team. If $\mathcal{C}=\{\emptyset\}$, then either $\mathcal{D}_1$ or $\mathcal{D}_2$ is $\{\emptyset\}$. If $\mathcal{D}_2=\{\emptyset\}$, then we derive $\phi_1\land\phi_2\vdashv\Psi^{\prime}_{\mathcal{D}_1}\land \bot\vdashv \bot=\Psi^{\prime}_{\mathcal{D}_2}$ by $\land$E and $\bot$E. The case when $\mathcal{D}_1=\{\emptyset\}$ is similar. Suppose now that neither $\mathcal{D}_1$ nor $\mathcal{D}_2$ contains the empty team. Now $$\mathcal{C}=\{S\mid S\supseteq T_1\cup T_2 \text{ for some } T_1\subseteq\mathcal{D}_1 \text{ and }  T_2\subseteq\mathcal{D}_2\}.$$
        We show first that $\Psi^{\prime}_\mathcal{C}\vdash \phi_1\land\phi_2$. By $\glor$E and the induction hypothesis, it suffices to prove for each $S\in\mathcal{C}$ that $\psi^{\prime}_S\vdash \Psi^{\prime}_{\mathcal{D}_1}\land\Psi^{\prime}_{\mathcal{D}_2}$. By construction of $\mathcal{C}$ there is some $T_1\in \mathcal{D}_1$ for which $S\supseteq T_1$, hence we derive $\psi^{\prime}_S\vdash \psi^{\prime}_{T_1}\vdash \Psi^{\prime}_{\mathcal{D}_1}$ by $\land$E and $\glor$I. The derivation of $\Psi^{\prime}_{\mathcal{D}_2}$ is similar, and we conclude that $\Psi^{\prime}_\mathcal{C}\vdash \Psi^{\prime}_{\mathcal{D}_1}\land\Psi^{\prime}_{\mathcal{D}_2}$.

        For the other direction, by the induction hypothesis, it suffices to derive $\Psi^{\prime}_{\mathcal{D}_1}, \Psi^{\prime}_{\mathcal{D}_2}\vdash \Psi^{\prime}_{\mathcal{C}}$. By $\glor$E, this reduces to showing that $\psi^{\prime}_{T_1},\psi^{\prime}_{T_2}\vdash \Psi^{\prime}_{\mathcal{C}}$ for all $T_1\in \mathcal{D}_1$ and $T_2\in \mathcal{D}_2$. 
        Since $T_1\cup T_2\in\mathcal{C}$ and $S\supseteq T_1\cup T_2$, we obtain the desired derivation $\psi^{\prime}_{T_1},\psi^{\prime}_{T_2}\vdash\psi^{\prime}_{T_1\cup T_2}\vdash  \Psi^{\prime}_{\mathcal{C}}$ using $\land$I and $\glor$I. 
    \end{enumerate}
\end{proof}

With the result of \Cref{quc derivable nf}, proving completeness of the system is straightforward. 

\begin{theorem}  The rules in \Cref{rules:wuc_logic} form a complete proof system for $\mathcal{L}_{qu}$, i.e., for a set of $\mathcal{L}_{qu}$-formulas $\Gamma\cup\{\phi\}$, if $\Gamma\models\phi$ then $\Gamma\vdash \phi$.
\end{theorem}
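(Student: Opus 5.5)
The strategy is the usual one for propositional team logics: reduce to finite premise sets by compactness, then turn the semantic consequence into a syntactic one through the normal form of \Cref{quc derivable nf}. The introduction notes that all propositional team logics over a finite $\mathbb{P}$ are compact, by the arguments of \cite{ciardelli2009,YANG2017}. Indeed, up to equivalence there are only finitely many team properties, so from any $\Gamma$ we can pick a finite $\Gamma_0\subseteq\Gamma$ with $\bigcap_{\phi\in\Gamma_0}\lVert\phi\rVert=\bigcap_{\phi\in\Gamma}\lVert\phi\rVert$. Hence we may assume $\Gamma=\{\gamma_1,\dots,\gamma_k\}$ is finite. Using $\land$I and $\land$E we can replace $\Gamma$ by the single formula $\gamma=\bigwedge_i\gamma_i$; if $k=0$, take $\gamma=\sempty\subseteq\sempty$, which is derivable by $\top$I. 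It therefore suffices to prove that $\gamma\models\phi$ implies $\gamma\vdash\phi$.

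Next, apply \Cref{quc derivable nf} to obtain team properties $\mathcal{D}$ and $\mathcal{C}$, each either equal to $\{\emptyset\}$ or free of the empty team, with $\gamma\dashv\vdash\Psi'_\mathcal{D}$ and $\phi\dashv\vdash\Psi'_\mathcal{C}$. By soundness (\Cref{wuc_soundness}) these provable equivalences are also semantic ones, so $\Psi'_\mathcal{D}\models\Psi'_\mathcal{C}$. It remains to derive $\Psi'_\mathcal{D}\vdash\Psi'_\mathcal{C}$, since we can then chain $\gamma\vdash\Psi'_\mathcal{D}\vdash\Psi'_\mathcal{C}\vdash\phi$.

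There are three cases.
\begin{enumerate}[label=\textbf{-}]
\item If $\mathcal{D}=\{\emptyset\}$, then $\Psi'_\mathcal{D}=\bot$, and $\bot$E gives the derivation.
\item If $\mathcal{C}=\{\emptyset\}$ but $\mathcal{D}\neq\{\emptyset\}$, then some nonempty $S\in\mathcal{D}$ satisfies $\Psi'_\mathcal{D}$ but not $\bot$. This contradicts $\Psi'_\mathcal{D}\models\Psi'_\mathcal{C}$, so the case cannot occur.
\item Otherwise neither property contains the empty team. By \Cref{wuc lm 1} \cref{wuc lm 2 item}, each $S\in\mathcal{D}$ then has some $T\in\mathcal{C}$ with $S\supseteq T$. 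Since $\psi'_T$ is a subconjunction of $\psi'_S$, repeated $\land$E and $\land$I give $\psi'_S\vdash\psi'_T$, and $\glor$I then gives $\psi'_T\vdash\Psi'_\mathcal{C}$. Applying $\glor$E over the disjuncts of $\Psi'_\mathcal{D}$ yields $\Psi'_\mathcal{D}\vdash\Psi'_\mathcal{C}$.
\end{enumerate}

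The main work has already been done in \Cref{quc derivable nf}, so the remaining delicate points are the compactness reduction and the handling of the degenerate cases $\{\emptyset\}$ and $\Gamma=\emptyset$. The compactness step should be justified carefully: finitely many formulas up to equivalence suffice, because $\mathbb{P}$ is finite and so there are only finitely many team properties.
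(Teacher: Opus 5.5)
Your proof is correct and follows the same route as the paper. You reduce to a single premise by compactness, pass to the normal forms of \Cref{quc derivable nf} and use soundness to get $\Psi'_\mathcal{D}\models\Psi'_\mathcal{C}$; you then dispose of the $\bot$ cases and finish with item (ii) of \Cref{wuc lm 1} together with $\land$E, $\glor$I and $\glor$E. Your direct compactness argument from the finiteness of $\mathbb{P}$ and your explicit handling of $\Gamma=\emptyset$ and of $\mathcal{C}=\{\emptyset\}$ only fill in details the paper leaves implicit.
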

\begin{proof}
By compactness, it suffices to show that if $\gamma\models\phi$, then $\gamma\vdash \phi$.  By \Cref{quc derivable nf}, there are nonempty team properties $\mathcal{D}$ and $\mathcal{C}$ such that $\gamma\dashv\vdash\Psi^{\prime}_{\mathcal{D}}$ and $\phi\dashv\vdash\Psi^{\prime}_{\mathcal{C}}$. By soundness, $\Psi^{\prime}_{\mathcal{D}}\models \Psi^{\prime}_{\mathcal{C}}$. If $\Psi^{\prime}_{\mathcal{D}}=\bot$ we conclude the proof by $\bot$E, and if $\Psi^{\prime}_{\mathcal{C}}=\bot$, also $\Psi^{\prime}_{\mathcal{D}}=\bot$ and the result is immediate. Suppose that neither normal form is $\bot$, then by construction of the normal forms, we can assume that neither $\mathcal{C}$ nor $\mathcal{D}$ contains the empty team. By \Cref{wuc lm 1} \cref{wuc lm 2 item}, there is some nonempty team $T\in \mathcal{C}$ such that $S\supseteq T$, thus we derive $\psi^{\prime}_{S}\vdash_{\land E} \psi^{\prime}_{T}\vdash_{\glor I}\Psi^{\prime}_{\mathcal{C}}$. Now
$\Psi^{\prime}_{\mathcal{D}}\vdash\Psi^{\prime}_{\mathcal{C}}$ follows by $\glor$E. We conclude $\gamma\vdash \phi$.
\end{proof}

We end this subsection on $\mathcal{L}_{qu}$ with an example derivation.

\begin{example}
    We can derive $\vdash \top\subseteq q\glor \bot\subseteq q$ in the system for $\mathcal{L}_{qu}$.
    \bigbreak
    
    \centering\AxiomC{$\sempty\subseteq\sempty$}

\AxiomC{$[\top\subseteq q]$}
\RightLabel{$\glor$I}
\UnaryInfC{$\top\subseteq q\glor \bot\subseteq q$}

\AxiomC{$[\bot\subseteq q]$}
\RightLabel{$\glor$I}
\UnaryInfC{$\top\subseteq q\glor \bot\subseteq q$}

\RightLabel{$\subseteq$Ext}
\TrinaryInfC{$\top\subseteq q\glor \bot\subseteq q$}
\DisplayProof
\end{example}

\subsection{Upward closed logic}

We show that the logic $\mathcal{L}_{u}$ is expressively complete for all upward closed team properties and provide a complete axiomatization. 

Recall the syntax of the upward closed logic $\mathcal{L}_{u}$, \[\phi::= \top \mid \mathsf{x}\subseteqq \mathsf{p} \mid (\phi\land\phi)\mid (\phi\glor\phi).\]

%


No formula in the $\top$-free fragment of the logic is satisfied by the empty team, hence $\mathcal{L}_{u}$ does not have the empty team property. Moreover, $\sempty\subseteqq \sempty$ is not equivalent with $\top$, since $T\models\sempty\subseteqq \sempty$ if and only if $T\neq\emptyset$. Recall that we restrict the variables in the sequence $\mathsf{p}$ of $\mathsf{x}\subseteqq \mathsf{p}$, in this way, all formulas in the logic are satisfied by at least one team. 

We have already seen that the logic can only express upward closed team properties; we aim to show that it can express all of them by using the logic's normal form.  

Let the sequence $\mathsf{p}$ contain all propositional symbols in $\mathbb{P}$. For any team $T$, define $$\psi^{*}_T:=\bigwedge_{v\in T} \mathsf{x}^v\subseteqq \mathsf{p}.$$ 

Note that now $\psi^{*}_\emptyset=\bigwedge\emptyset=\top$, which is as wanted since for an upward closed team property $\mathcal{C}$, we have that if $\emptyset\in\mathcal{C}$, then $\mathcal{C}$ contains all teams over $\mathbb{P}$. 

The formulas $\psi^{*}_T$ behave similarly to the corresponding ones in the quasi upward closed setting, but now without special cases for the empty team. 

\begin{lemma} \label[lemma]{uc lm 1 B}
Let $\mathcal{C}$ and $\mathcal{D}$ be nonempty team properties.
\begin{enumerate}[label =$($\roman*$)$]


      \item
$S\models \psi^{*}_T$ iff  $S\supseteq T$. \label{uc lm 1 item B}

\label{uc lm 2 item A}
              \item $\Glor_{S\in\mathcal{D}}\psi^{*}_S\models \Glor_{T\in\mathcal{C}}\psi^{*}_T$ iff for each $S\in\mathcal{D}$ there is some $T\in\mathcal{C}$ such that $S\supseteq T$. \label{uc lm 2 item B}
  

\end{enumerate} 
\end{lemma}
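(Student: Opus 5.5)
The plan follows the proof of \Cref{wuc lm 1}, with the special treatment of the empty team removed.

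For \cref{uc lm 1 item B} we first isolate the basic fact about a single atom. Because $\mathsf{p}$ lists every symbol of $\mathbb{P}$ exactly once, a team $S$ satisfies $\mathsf{x}^v\subseteqq\mathsf{p}$ if and only if $S\neq\emptyset$ and some $v'\in S$ has $v'(\mathsf{p})=v(\mathsf{p})$. Such a $v'$ must equal $v$, so the condition is simply $v\in S$; note that $v\in S$ already forces $S\neq\emptyset$.

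Applying this to each conjunct gives $S\models\psi^{*}_T$ if and only if $v\in S$ for every $v\in T$, that is, $S\supseteq T$. The case $T=\emptyset$ needs no separate argument: then $\psi^{*}_T=\top$, which every $S$ satisfies, and $S\supseteq\emptyset$ holds for every $S$. Unlike the quasi case, the empty team $S=\emptyset$ also needs no exception. It satisfies $\psi^{*}_T$ only when $T=\emptyset$, which is exactly when $\emptyset\supseteq T$.

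For \cref{uc lm 2 item B}, left to right, fix $S\in\mathcal{D}$. By \cref{uc lm 1 item B}, $S\models\psi^{*}_S$, so $S$ satisfies the left-hand global disjunction. By the assumed entailment, $S\models\psi^{*}_T$ for some $T\in\mathcal{C}$, and \cref{uc lm 1 item B} then gives $S\supseteq T$.

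Right to left, suppose $S'\models\Glor_{S\in\mathcal{D}}\psi^{*}_S$. Then $S'\models\psi^{*}_S$ for some $S\in\mathcal{D}$, so $S'\supseteq S$ by \cref{uc lm 1 item B}. The hypothesis gives some $T\in\mathcal{C}$ with $S\supseteq T$, hence $S'\supseteq T$. Applying \cref{uc lm 1 item B} again gives $S'\models\psi^{*}_T$, and therefore $S'$ satisfies the right-hand global disjunction.

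The only step needing real care is the first observation: that the nonemptiness requirement in $\subseteqq$, together with $\mathsf{p}$ having no repeated symbols, makes $\mathsf{x}^v\subseteqq\mathsf{p}$ equivalent to $v\in S$ for all teams, the empty team included. Once this is in place, the rest is set-inclusion bookkeeping with no case split on $\emptyset$.
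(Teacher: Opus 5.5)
Your proof is correct and takes the same approach as the paper, which just adapts the argument of \Cref{wuc lm 1} to $\psi^{*}_T$ with the empty-team special cases dropped; characterizing $\mathsf{x}^v\subseteqq\mathsf{p}$ as ``$v\in S$'' for every team $S$, including $\emptyset$, is what makes that adaptation work. In (ii), left to right, you evaluate the entailment directly at the team $S\in\mathcal{D}$, which is cleaner than the paper's route in the quasi case through an unjustified intermediate claim $\psi'_S\models\psi'_T$.
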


\begin{proof}
  The proofs are straightforward and similar to the ones in \Cref{wuc lm 1}.
\end{proof}

The expressive completeness result is now obtained by considering the normal forms $$\Psi^{*}_\mathcal{C}:=\Glor_{T\in\mathcal{C}}\psi^{*}_T,$$ where $\mathcal{C}$ is a nonempty team property.

\begin{theorem}
    $\mathcal{L}_{u}$ is expressively complete for all nonempty upward closed team properties $\mathcal{C}$.
\end{theorem}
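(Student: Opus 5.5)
The argument follows the one for $\mathcal{L}_{qu}$, with \Cref{uc lm 1 B} in place of \Cref{wuc lm 1}, and is simpler because the empty team needs no separate treatment. There are two directions.

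\emph{Soundness of the closure property.} Every $\phi\in\mathcal{L}_u$ defines an upward closed property; this was established in the proposition on closure properties. We also need $\lVert\phi\rVert$ to be nonempty, and a straightforward induction gives more: the full team $\mathbb{F}$ satisfies every formula of $\mathcal{L}_u$.
\begin{itemize}
\item $\mathbb{F}\models\top$ trivially.
\item $\mathbb{F}\models\mathsf{x}\subseteqq\mathsf{p}$, because $\mathbb{F}\neq\emptyset$ and, since $\mathsf{p}$ has no repeated symbols, some valuation $v\in\mathbb{F}$ has $v(\mathsf{p})=v(\mathsf{x})$. This is exactly where the syntactic restriction on $\mathsf{p}$ is used.
\item Conjunction and global disjunction preserve satisfaction by $\mathbb{F}$.
\end{itemize}
So $\lVert\phi\rVert$ is nonempty and upward closed.

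\emph{Definability.} Let $\mathcal{C}$ be a nonempty upward closed team property. We claim $\mathcal{C}=\lVert\Psi^{*}_{\mathcal{C}}\rVert$. Since $\mathcal{C}$ is nonempty, $\Psi^{*}_\mathcal{C}$ is a global disjunction over a nonempty finite index set, finite because $\mathbb{P}$ is finite. It is therefore a well-formed formula of $\mathcal{L}_u$; when $\emptyset\in\mathcal{C}$ one of its disjuncts is $\psi^*_\emptyset=\top$. For any team $S$,
\begin{align*}
S\models\Psi^{*}_\mathcal{C} \iff & S\models\psi^{*}_T \text{ for some } T\in\mathcal{C}\\
\iff & S\supseteq T \text{ for some } T\in\mathcal{C}\\
\iff & S\in\mathcal{C}.
\end{align*}
The first equivalence is the semantics of $\glor$. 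The second is \Cref{uc lm 1 B} \cref{uc lm 1 item B}. For the third, upward closure of $\mathcal{C}$ gives the left-to-right direction, and taking $T=S$ gives the converse. No special case for $\mathcal{C}$ containing $\emptyset$ is needed: then $\psi^*_\emptyset=\top$ and every team belongs to $\mathcal{C}$, consistent with the display.

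The only step needing real care is \Cref{uc lm 1 B} \cref{uc lm 1 item B}, which the paper leaves as "similar". I would verify it directly. For $T=\emptyset$ both sides hold for every $S$. For $T\neq\emptyset$:
\begin{itemize}
\item If $S\models\psi^*_T$, each conjunct forces $S\neq\emptyset$ and the presence of the unique $v$ with $v(\mathsf{p})=\mathsf{x}^v$ in $S$, since $\mathsf{p}$ enumerates all of $\mathbb{P}$. Hence $T\subseteq S$.
\item Conversely, if $S\supseteq T\neq\emptyset$, then $S$ is nonempty and contains each $v\in T$, so each conjunct holds.
\end{itemize}
With this lemma in place, the theorem follows.
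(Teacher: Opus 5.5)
Your proof is correct and follows essentially the same route as the paper: you use the normal form $\Psi^{*}_{\mathcal{C}}=\Glor_{T\in\mathcal{C}}\psi^{*}_T$ and the same chain of equivalences, via part (i) of the lemma on $\psi^{*}_T$ and upward closure of $\mathcal{C}$. Your additional induction showing that $\mathbb{F}$ satisfies every $\mathcal{L}_u$-formula is a worthwhile detail: it justifies that $\lVert\phi\rVert$ is nonempty, which the paper attributes somewhat loosely to upward closure alone.
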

\begin{proof}
    By upward closure of the logic, for any $\phi\in \mathcal{L}_{u}$, there is some nonempty $\mathcal{C}$ such that  $\mathcal{C}=\lVert\phi\rVert$.

    For the other direction, let $\mathcal{C}$ be a nonempty upward closed team property and consider the formula $\Psi^{*}_\mathcal{C}=\Glor_{T\in\mathcal{C}}\psi^{*}_T$.

    For any team $S$ over $\mathbb{P}$, we have 
\begin{align*}
  S \models  \Glor_{T \in \mathcal{C}} \psi^{*}_T \iff & S\models \psi^{*}_T \text{ for some } T\in \mathcal{C}  \\
    \iff & S\supseteq T \text{ for some } T\in \mathcal{C} \\
  \iff & S \in \mathcal{C},
\end{align*}
where the second equivalence is by  \Cref{uc lm 1 B} \cref{uc lm 1 item B}, and the last equivalence is by We conclude $\mathcal{C}=\lVert\Glor_{T \in \mathcal{C}} \psi^{*}_{T}\rVert$.
\end{proof}

We present a proof system for the logic $\mathcal{L}_{u}$ in \Cref{rules:uc_logic}. The system is very similar to that of $\mathcal{L}_{qu}$, obtained by omitting the rule $\bot$E, replacing $\sempty\subseteq\sempty$ with $\top$ in the rule $\top$I, and finally replacing the inclusion symbol $\subseteq$ with the nonempty variant $\subseteqq$. The rules for the connectives $\land$ and $\glor$ remain unchanged.

\begin{table}[bht]\centering \small
  \renewcommand*{\arraystretch}{3.8}
\begin{tabular*}{\linewidth}{@{\extracolsep{\fill}}|ccc|}
\hline

\multicolumn{3}{|c|}{
     \AxiomC{} 
  \RightLabel{$\top$I}
  \UnaryInfC{$\top$}
\DisplayProof
\hspace{2cm}
The rules $\land$I, $\land$E, $\glor$I and $\glor$E from \Cref{rules:wuc_logic}.}
 \\ [1ex] 

 \AxiomC{}
     \noLine  
     \UnaryInfC{$\mathsf{x}y\subseteqq\mathsf{p}q$}
     \RightLabel{$\subseteqq$Proj}
\UnaryInfC{$\mathsf{x}\subseteqq\mathsf{p}$}
\DisplayProof
&

 \AxiomC{}
     \noLine  
     \UnaryInfC{$\mathsf{x}\mathsf{y}\mathsf{z}\subseteqq\mathsf{u}\mathsf{v}\mathsf{w}$}
     \RightLabel{$\subseteqq$Perm (1)}
\UnaryInfC{$\mathsf{x}\mathsf{z}\mathsf{y}\subseteqq\mathsf{u}\mathsf{w}\mathsf{v}$} 
 \DisplayProof
& 
\AxiomC{}
 \noLine
 \UnaryInfC{$\mathsf{x}\subseteqq\mathsf{p}$}

 \AxiomC{$[\mathsf{x}\top\subseteqq\mathsf{p}q]$}
 \noLine
 \UnaryInfC{\vdots}
 \noLine
 \UnaryInfC{$\chi$} 
 
\AxiomC{$[\mathsf{x}\bot\subseteqq\mathsf{p}q]$}
 \noLine
 \UnaryInfC{\vdots}
 \noLine
 \UnaryInfC{$\chi$} 
 \RightLabel{$\subseteqq$Ext}
 \TrinaryInfC{$\chi$}
 \DisplayProof
 \\ [4ex] 
 
 \hline

\multicolumn{3}{|c|}{(1) Provided that $|y|=|v|$ and $|z|=|w|$.} \\
 \hline
\end{tabular*}
 \caption{System for $\mathcal{L}_u$.}
\label{rules:uc_logic}
\end{table}

\begin{theorem}
       The rules in \Cref{rules:uc_logic}
 are sound for upward closed formulas.
\end{theorem}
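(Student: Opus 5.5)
We argue rule by rule, as in \Cref{wuc_soundness}. For a set of assumptions $\Gamma$, we show that if the premises of a rule are semantic consequences of the (undischarged) assumptions, then so is its conclusion. Here $T$ ranges over all teams, including the empty one. Throughout we use the fact established earlier that every formula of $\mathcal{L}_u$ is upward closed, and the observation that for any team $T$, $T\models\mathsf{x}\subseteqq\mathsf{p}$ iff there is $v\in T$ with $v(\mathsf{p})=v(\mathsf{x})$.

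The rules for $\top$, $\land$ and $\glor$ are sound by their semantic clauses alone. This is standard, and none of these rules uses the empty team property, so nothing changes relative to the quasi upward closed case. For $\subseteqq$Proj: if $T\models\mathsf{x}y\subseteqq\mathsf{p}q$, some $v\in T$ satisfies $v(\mathsf{p}q)=v(\mathsf{x}y)$, hence $v(\mathsf{p})=v(\mathsf{x})$. This gives $T\models\mathsf{x}\subseteqq\mathsf{p}$, and $T\neq\emptyset$ is inherited. For $\subseteqq$Perm: the witnessing valuation $v$ with $v(\mathsf{u}\mathsf{v}\mathsf{w})=v(\mathsf{x}\mathsf{y}\mathsf{z})$ also gives $v(\mathsf{u}\mathsf{w}\mathsf{v})=v(\mathsf{x}\mathsf{z}\mathsf{y})$, by the length conditions $|\mathsf{y}|=|\mathsf{v}|$ and $|\mathsf{z}|=|\mathsf{w}|$.

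The only rule needing care is $\subseteqq$Ext. Suppose $T\models\Gamma$ and $T\models\mathsf{x}\subseteqq\mathsf{p}$, and that $\Gamma,\mathsf{x}\top\subseteqq\mathsf{p}q\models\chi$ and $\Gamma,\mathsf{x}\bot\subseteqq\mathsf{p}q\models\chi$. Unlike in \Cref{wuc_soundness}, no separate treatment of the empty team is needed: the premise $\mathsf{x}\subseteqq\mathsf{p}$ already forces $T\neq\emptyset$. So there is $v\in T$ with $v(\mathsf{p})=v(\mathsf{x})$. Since $q$ does not occur in $\mathsf{p}$ (the syntactic restriction that makes $\mathsf{p}q$ repetition-free), the value $v(q)$ is unconstrained. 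If $v(q)=1$, then $\{v\}\models\mathsf{x}\top\subseteqq\mathsf{p}q$, and by upward closure of the atom $T\models\mathsf{x}\top\subseteqq\mathsf{p}q$. Together with $T\models\Gamma$ this yields $T\models\chi$. The case $v(q)=0$ is symmetric with $\mathsf{x}\bot\subseteqq\mathsf{p}q$.

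The main (minor) obstacle is this last step: making sure the extended atom is genuinely satisfiable by $v$. This requires that $q$ not already occur in $\mathsf{p}$, so the rule must be read with that side condition, implicit in the well-formedness of $\mathsf{x}\top\subseteqq\mathsf{p}q$. Beyond this, the argument is a direct transcription of the proof of \Cref{wuc_soundness}, with the emptiness case absorbed by the nonempty inclusion atom.
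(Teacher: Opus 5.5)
Your proof is correct and follows the paper's route. The paper only says the proof is routine, with $\subseteqq$Ext handled as $\subseteq$Ext in \Cref{wuc_soundness}; your rule-by-rule check, with the empty-team case absorbed by the premise $\mathsf{x}\subseteqq\mathsf{p}$, is that argument spelled out. One small correction: the case split on $v(q)$ does not need $q\notin\mathsf{p}$, since if $q$ occurred in $\mathsf{p}$ one of the two cases would simply be vacuous, so that condition matters only for the well-formedness of the extended atoms, not for soundness.
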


\begin{proof}
    Routine proof, where in particular, showing soundness of $\subseteqq$Ext is similar to the soundness proof for $\subseteq$Ext in \Cref{wuc_soundness}.
\end{proof}

We show that each formula is provably equivalent to some formula in the logic's normal form.

\begin{lemma} \label[lemma]{uc derivable nf B}
    For $\phi\in \mathcal{L}_{u}$, there is a nonempty team property $\mathcal{C}$ such that $\phi\dashv\vdash \Psi^{*}_\mathcal{C}=\Glor_{T\in\mathcal{C}}\psi^{*}_T$.
\end{lemma}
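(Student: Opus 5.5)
We argue by induction on the complexity of $\phi\in\mathcal{L}_u$, following the proof of \Cref{quc derivable nf} but without the special cases for $\bot$ and the empty team. Throughout, $\mathsf{p}$ lists all symbols of $\mathbb{P}$, and for a team $T$ the formula $\psi^{*}_T$ is the conjunction of $\mathsf{x}^v\subseteqq\mathsf{p}$ over $v\in T$. For each case we take $\mathcal{C}$ to be upward closed; since this $\mathcal{C}$ then equals $\lVert\phi\rVert$, this is harmless, and it will be needed only in the $\land$ case.

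\emph{Base case $\phi=\top$.} We take $\mathcal{C}$ to be the set of all teams. Then $\psi^{*}_\emptyset=\top$ is one of the disjuncts of $\Psi^{*}_\mathcal{C}$, so $\top\vdash\Psi^{*}_\mathcal{C}$ by $\glor$I. The converse $\Psi^{*}_\mathcal{C}\vdash\top$ is immediate by $\top$I.

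\emph{Base case $\phi=\mathsf{y}\subseteqq\mathsf{q}$.} We take $\mathcal{C}=\lVert\mathsf{y}\subseteqq\mathsf{q}\rVert$, which contains no empty team.

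For $\Psi^{*}_\mathcal{C}\vdash\mathsf{y}\subseteqq\mathsf{q}$, we use $\glor$E and treat each disjunct $\psi^{*}_T$ with $T\in\mathcal{C}$. Such a $T$ contains some $v$ with $v(\mathsf{q})=v(\mathsf{y})$. From $\psi^{*}_T$ we extract $\mathsf{x}^v\subseteqq\mathsf{p}$ by $\land$E, reorder it by $\subseteqq$Perm so that $\mathsf{q}$ comes first, and cut away the remaining symbols by $\subseteqq$Proj. This gives $\mathsf{y}\subseteqq\mathsf{q}$, because on the positions of $\mathsf{q}$ the constants of $\mathsf{x}^v$ coincide with $\mathsf{y}$.

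For $\mathsf{y}\subseteqq\mathsf{q}\vdash\Psi^{*}_\mathcal{C}$, we apply $\subseteqq$Ext once for each symbol $r$ of $\mathsf{p}$ that is not in $\mathsf{q}$. The repetition-free restriction on the right-hand side of $\subseteqq$ guarantees that each such $r$ is new, so every application is legitimate. After $\subseteqq$Perm, each branch ends in a formula $\mathsf{x}^v\subseteqq\mathsf{p}=\psi^{*}_{\{v\}}$ with $\{v\}\in\mathcal{C}$, and $\glor$I then yields $\Psi^{*}_\mathcal{C}$ in every branch.

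\emph{Inductive case $\phi_1\glor\phi_2$.} By the induction hypothesis and $\glor$I/$\glor$E, $\phi_1\glor\phi_2\dashv\vdash\Psi^{*}_{\mathcal{D}_1}\glor\Psi^{*}_{\mathcal{D}_2}$, and this disjunction is, up to regrouping by $\glor$I/$\glor$E, the formula $\Psi^{*}_{\mathcal{D}_1\cup\mathcal{D}_2}$.

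\emph{Inductive case $\phi_1\land\phi_2$.} This is the main step. We take $\mathcal{C}=\mathcal{D}_1\cap\mathcal{D}_2$ where, by the invariant, the $\mathcal{D}_i$ are upward closed. Upward closure gives $T_1\cup T_2\in\mathcal{C}$ for every $T_1\in\mathcal{D}_1$ and $T_2\in\mathcal{D}_2$, and every $S\in\mathcal{C}$ lies in both $\mathcal{D}_1$ and $\mathcal{D}_2$.

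For $\Psi^{*}_\mathcal{C}\vdash\Psi^{*}_{\mathcal{D}_1}\land\Psi^{*}_{\mathcal{D}_2}$, we use $\glor$E: each disjunct $\psi^{*}_S$ with $S\in\mathcal{C}$ is itself a disjunct of both $\Psi^{*}_{\mathcal{D}_i}$, so $\glor$I and $\land$I suffice.

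Conversely, we apply $\glor$E twice and reduce to showing $\psi^{*}_{T_1},\psi^{*}_{T_2}\vdash\psi^{*}_{T_1\cup T_2}$. This holds by $\land$E and $\land$I, since every conjunct $\mathsf{x}^v\subseteqq\mathsf{p}$ of the right-hand side, for $v\in T_1\cup T_2$, already occurs in $\psi^{*}_{T_1}$ or in $\psi^{*}_{T_2}$. A final $\glor$I gives $\Psi^{*}_\mathcal{C}$. If $T_1\cup T_2=\emptyset$, the conclusion is $\top$, which we obtain by $\top$I.

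The points needing the most care are in the atom case: checking that $\subseteqq$Ext may be applied, which is where the syntactic restriction on $\mathsf{p}$ enters, and checking that the resulting branches match exactly the singleton teams in $\mathcal{C}$.
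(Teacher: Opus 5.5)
Your proposal is correct and follows the paper's induction essentially step for step. The atom case uses $\glor$E/$\land$E/Perm/Proj in one direction and iterated $\subseteqq$Ext in the other, the $\glor$ case takes $\mathcal{D}_1\cup\mathcal{D}_2$, and the $\land$ case reduces to $\psi^{*}_{T_1},\psi^{*}_{T_2}\vdash\psi^{*}_{T_1\cup T_2}$. The only difference is bookkeeping: you maintain an explicit upward-closure invariant (all teams for $\top$, $\mathcal{D}_1\cap\mathcal{D}_2$ for $\land$), whereas the paper takes $\{\emptyset\}$ for $\top$ and the supersets of $S_1\cup S_2$ for $\land$, which coincides with your intersection under your invariant.
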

\begin{proof}


\begin{enumerate}[label=\textbf{-}]
        \item  If $\phi=\top$, then let $\mathcal{C}=\{\emptyset\}$. Now $\Psi^{*}_\mathcal{C}=\psi^{*}_\emptyset=\top$, hence the derivation is trivial.

\item If $\phi=\mathsf{y}\subseteqq\mathsf{q}$, let $\mathcal{D}=\lVert\mathsf{y}\subseteqq \mathsf{q}\rVert$. We show first that $\Psi^{*}_{\mathcal{D}}\vdash \mathsf{y}\subseteqq \mathsf{q}$.
Note that $\emptyset\not\in \mathcal{D}$. Hence each $T\in\mathcal{D}$ has a valuation $v\in T$ such that $v(\mathsf{q})=v(\mathsf{y})$, thus we derive $\mathsf{y}\subseteqq \mathsf{q}$  by $\glor$E, $\land$E, $\subseteqq$Perm and  $\subseteqq$Proj. 

For the other direction, use the arguments of the corresponding $\subseteq$-case in the proof of \Cref{quc derivable nf}, using the rule $\subseteqq$Ext instead of $\subseteq$Ext.
%

\item[(IH)] Let $\mathcal{D}_1,\mathcal{D}_2$ be nonempty team properties such that $\phi_1\dashv\vdash \Psi^{*}_{\mathcal{D}_1}$ and $\phi_2\dashv\vdash \Psi^{*}_{\mathcal{D}_2}$.
        
        \item  If $\phi=\phi_1\glor\phi_2$, then  by $\glor$E, $\glor$I and the induction hypothesis, we have $\phi_1\glor \phi_2\dashv\vdash \Psi^{*}_{\mathcal{D}_1}\glor\Psi^{*}_{\mathcal{D}_1}= \Psi^{*}_{\mathcal{D}_1\cup \mathcal{D}_2}$.
        
        \item  If $\phi=\phi_1\land\phi_2$, let $$\mathcal{C}=\{T\mid S_1\cup S_2\subseteq T \text{ for some } S_1\subseteq\mathcal{D}_1 \text{ and } S_2\subseteq\mathcal{D}_2\}.$$
We then reason like in the corresponding $\land$-case in the proof of \Cref{quc derivable nf}, without the special cases relating to the empty team. 
%
    \end{enumerate}
\end{proof}

We can now prove completeness of the system for the upward closed logic $\mathcal{L}_u$.

\begin{theorem}
  The rules in \Cref{rules:uc_logic} form a complete proof system for $\mathcal{L}_{u}$, i.e., for a set of $\mathcal{L}_{u}$-formulas $\Gamma\cup\{\phi\}$, if $\Gamma\models\phi$ then $\Gamma\vdash \phi$.
\end{theorem}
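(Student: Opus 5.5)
We follow the completeness proof for $\mathcal{L}_{qu}$. First, compactness reduces the claim to the single-premise case. The propositional team logics considered here are compact by the arguments of \cite{ciardelli2009,YANG2017}. So if $\Gamma\models\phi$, there is a finite $\Gamma_0\subseteq\Gamma$ with $\Gamma_0\models\phi$, and we may take $\gamma:=\bigwedge\Gamma_0$, with $\gamma=\top$ when $\Gamma_0=\emptyset$. Since $\land$I and $\land$E give $\Gamma_0\dashv\vdash\gamma$, and $\top$I derives $\top$ from no premises, it suffices to show that $\gamma\models\phi$ implies $\gamma\vdash\phi$.

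Next, we apply \Cref{uc derivable nf B} to obtain nonempty team properties $\mathcal{D}$ and $\mathcal{C}$ with $\gamma\dashv\vdash\Psi^{*}_{\mathcal{D}}$ and $\phi\dashv\vdash\Psi^{*}_{\mathcal{C}}$. By soundness of \Cref{rules:uc_logic}, semantic consequence transfers along these derivations, so $\Psi^{*}_{\mathcal{D}}\models\Psi^{*}_{\mathcal{C}}$. Unlike the quasi case, no special treatment of $\bot$ or of the empty team is needed. $\mathcal{L}_u$ has no $\bot$, and $\psi^*_\emptyset=\top$ is handled uniformly by \Cref{uc lm 1 B}, whose statement has no restriction on $\emptyset$.

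By \Cref{uc lm 1 B} \cref{uc lm 2 item B}, for every $S\in\mathcal{D}$ there is some $T\in\mathcal{C}$ with $S\supseteq T$. Each conjunct $\mathsf{x}^v\subseteqq\mathsf{p}$ of $\psi^*_T$, for $v\in T$, also occurs in $\psi^*_S$. Hence repeated $\land$E followed by $\land$I gives $\psi^*_S\vdash\psi^*_T$. If $T=\emptyset$, then $\psi^*_T=\top$ is obtained directly by $\top$I. Then $\glor$I gives $\psi^*_T\vdash\Psi^*_{\mathcal{C}}$. Finally, $\glor$E over all disjuncts $S\in\mathcal{D}$ yields $\Psi^*_{\mathcal{D}}\vdash\Psi^*_{\mathcal{C}}$. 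Chaining this with the provable equivalences gives $\gamma\vdash\phi$, and therefore $\Gamma\vdash\phi$.

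The only step needing some care is the reduction to a single premise. It relies on compactness, which the introduction asserts for all these logics via the inquisitive-logic argument; everything after that is routine. Should the reader prefer not to invoke compactness, the argument can be adapted by noting that over the finite set $\mathbb{P}$ there are only finitely many non-equivalent formulas. By \Cref{uc derivable nf B}, a set $\Gamma$ is then semantically equivalent to some finite subset, which gives the same reduction.
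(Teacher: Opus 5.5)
Your proof is correct and follows essentially the same route as the paper. You reduce to a single premise by compactness, pass to the normal forms $\Psi^{*}_{\mathcal{D}}$ and $\Psi^{*}_{\mathcal{C}}$ via \Cref{uc derivable nf B} and soundness, use \Cref{uc lm 1 B}~\cref{uc lm 2 item B} to derive each $\psi^{*}_S\vdash\psi^{*}_T\vdash\Psi^{*}_{\mathcal{C}}$, and close with $\glor$E. Your treatment of the empty premise set via $\top$I, the use of $\land$I alongside $\land$E, and the remark that compactness is trivial over the finite $\mathbb{P}$ fill in details the paper leaves implicit.
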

\begin{proof}
By compactness, it suffices to show that if $\gamma\models\phi$, then $\gamma\vdash \phi$.  By \Cref{uc derivable nf B}, there are nonempty team properties $\mathcal{D}_1$ and $\mathcal{D}_2$ such that $\gamma\dashv\vdash\Psi^{*}_{\mathcal{D}_1}$ and $\phi\dashv\vdash\Psi^{*}_{\mathcal{D}_2}$. By soundness, $\Psi^{*}_{\mathcal{D}_1}\models \Psi^{*}_{\mathcal{D}_2}$. 
%
%
By \Cref{uc lm 1 B} \cref{uc lm 2 item B}, for each $S\in \mathcal{D}$, there is some $T\in \mathcal{C}$ such that $S\supseteq T$. Thus we derive $\psi^{*}_{S}\vdash \psi^{*}_{T}\vdash_{\glor I}\Psi^{*}_{\mathcal{C}}$, where the first step is trivial if $S\in \mathcal{D}$, and otherwise derived by either $\land$E, or 
$\top$I if $\emptyset\in \mathcal{C}$. 
Hence $\Psi^{*}_{\mathcal{D}}\vdash\Psi^{*}_{\mathcal{C}}$ by $\glor$E. We conclude $\gamma\vdash \phi$. 
\end{proof}

We end this subsection on $\mathcal{L}_{u}$ with an example derivation.

\begin{example}
    We can derive $\top\subseteq p\vdash \top\top\subseteqq pq\glor \top\bot\subseteqq pq$ in the system for $\mathcal{L}_{u}$. The derivation is similar to that of $\vdash \top\subseteq q\glor \bot\subseteq q$ in the system for $\mathcal{L}_{qu}$, but note here that $\emptyset\not\models \top\subseteqq q\glor \bot\subseteqq q$.
    
\medskip

\centering
\AxiomC{$\mathsf{x}\subseteqq\mathsf{p}$}

\AxiomC{$[\mathsf{x}\top\subseteqq\mathsf{p}q]$}
\RightLabel{$\glor$I}
\UnaryInfC{$\mathsf{x}\top\subseteqq\mathsf{p}q\glor \mathsf{x}\bot\subseteqq\mathsf{p}q$}

\AxiomC{$[\mathsf{x}\bot\subseteqq\mathsf{p}q]$}
\RightLabel{$\glor$I}
\UnaryInfC{$\mathsf{x}\top\subseteqq\mathsf{p}q\glor \mathsf{x}\bot\subseteqq\mathsf{p}q$}

\RightLabel{$\subseteq$Ext}
\TrinaryInfC{$\mathsf{x}\top\subseteqq\mathsf{p}q\glor \mathsf{x}\bot\subseteqq\mathsf{p}q$}
\DisplayProof

\end{example}

\subsection{Quasi downward closed logic}

We move from the (quasi) upward closed settings to the quasi downward closed setting, and aim to show that the logic $\mathcal{L}_{qd}$ is expressively complete for all quasi downward closed team properties. We also introduce a sound and complete proof system for the logic by modifying the one for $\PLglor$ in \cite{Yang16}. 
Recall the syntax of 
the logic $\mathcal{L}_{qd}$, 
\[\phi::= \bullet \mid \mathsf{p}\fullsubseteq \mathsf{x} \mid (\phi\land\phi) \mid (\phi\lor\phi)\mid (\phi\glor\phi).\]

We have already seen that this logic has the full team property and is quasi downward closed. Note that due to the full team atom $\bullet$, the logic does not have the empty team property. However, the $\bullet$-free fragment has the empty team property. With the full dual primitive inclusion atom, we have that $\lVert\mathsf{p}\fullsubseteq\mathsf{x}\rVert=\lVert\mathsf{p}^\mathsf{x}\rVert\cup\{\mathbb{F}\}$, and $\lVert p_1p_1\fullsubseteq\top\bot\rVert=\{\emptyset,\mathbb{F}\}$.

Next, we adapt the normal form for the logic $PD^{\glor}$ from \cite{Yang16} to our logic, while taking special care of the full team. First, we define formulas that capture subteams (modulo the full team).
 For a team $T$ and a sequence $\mathsf{p}$ containing all variables from $\mathbb{P}$, define 
$$\theta^{\prime}_{T}:= \bigvee_{v\in T} \mathsf{p}\fullsubseteq \mathsf{x}^v,$$
where we stipulate $\theta^{\prime}_{\emptyset}:= p_1\mathsf{p}\fullsubseteq\top\bot\dots\bot$. Thus, for all teams $T$, the formula $\theta^{\prime}_T$ is union closed and has the empty team property.

\begin{lemma} (Essentially \cite{Yang16}) \label[lemma]{lemma wdc}
  Let $T$ and $S$ be teams over the variables in $\mathbb{P}$ and let $\mathcal{D}$ and $\mathcal{C}$ be nonempty team properties not containing the full team $\mathbb{F}$.
  
  \begin{enumerate}[label=(\roman*)]
      \item $S\models \theta^{\prime}_T$ iff $S\subseteq T$ or $S=\mathbb{F}$. \label{lemma wdc i}
      \item $\Glor_{S\in\mathcal{D}}\theta^{\prime}_S\models \Glor_{T\in\mathcal{C}}\theta^{\prime}_T$ iff for each $S\in\mathcal{D}$ there exists $T\in\mathcal{C}$ such that $S\subseteq T$.\label{lemma wdc ii}
  \end{enumerate}
\end{lemma}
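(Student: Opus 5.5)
For \ref{lemma wdc i}, I would first record the behaviour of a single disjunct. By the semantics of the full inclusion atom and the equivalence $\mathsf{p}\subseteq\mathsf{x}\equiv\mathsf{p}^{\mathsf{x}}$, we have $S\models\mathsf{p}\fullsubseteq\mathsf{x}^v$ iff $S=\mathbb{F}$ or $S\subseteq\{v\}$. This uses that $\mathsf{p}$ lists every symbol of $\mathbb{P}$, so $\mathsf{p}^{\mathsf{x}^v}$ is satisfied exactly by $\emptyset$ and $\{v\}$.

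For the case $T=\emptyset$, the stipulated $\theta'_\emptyset=p_1\mathsf{p}\fullsubseteq\top\bot\dots\bot$ is satisfied exactly by $\emptyset$ and $\mathbb{F}$. Indeed, no valuation satisfies both $p_1$ and $\neg p_1$, which is the same observation as the example $\lVert p_1p_1\fullsubseteq\top\bot\rVert=\{\emptyset,\mathbb{F}\}$.

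For $T\neq\emptyset$, the right-to-left direction goes as follows. If $S=\mathbb{F}$, split $\mathbb{F}$ as $\mathbb{F}\cup\emptyset\cup\dots\cup\emptyset$, using one disjunct with $\mathbb{F}$ and the empty team property for the others. If $S\subseteq T$, split $S$ into the singletons $S\cap\{v\}$ for $v\in T$. For left-to-right, suppose $S\models\theta'_T$ and $S\neq\mathbb{F}$. Take a splitting $S=\bigcup_{v\in T}S_v$ with $S_v\models\mathsf{p}\fullsubseteq\mathsf{x}^v$. Each $S_v$ is $\emptyset$, $\{v\}$, or $\mathbb{F}$, and $S_v=\mathbb{F}$ is impossible because $S_v\subseteq S\subsetneq\mathbb{F}$. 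Hence $S\subseteq T$.

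The only delicate point is the parsing of the big $\bigvee$ as iterated binary split disjunctions. I would handle it by noting that $\lor$ is associative and commutative, so $\bigvee_{i}\phi_i$ is satisfied iff $S$ is a union of subteams $S_i\models\phi_i$, allowing empty pieces.

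For \ref{lemma wdc ii}, I would follow the proof of \Cref{wuc lm 1}\ref{wuc lm 2 item}, with the inclusions reversed.

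For left-to-right, take $S\in\mathcal{D}$. By \ref{lemma wdc i}, $S\models\theta'_S$, hence $S\models\Glor_{T\in\mathcal{C}}\theta'_T$, so $S\models\theta'_T$ for some $T\in\mathcal{C}$. By \ref{lemma wdc i} again, $S\subseteq T$ or $S=\mathbb{F}$, and the latter is excluded since $\mathbb{F}\notin\mathcal{D}$.

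For right-to-left, let $S'\models\Glor_{S\in\mathcal{D}}\theta'_S$. If $S'=\mathbb{F}$, then $S'$ satisfies every $\theta'_T$, and $\mathcal{C}$ is nonempty, so we are done. Otherwise $S'\models\theta'_S$ for some $S\in\mathcal{D}$, so $S'\subseteq S\subseteq T$ for some $T\in\mathcal{C}$, and \ref{lemma wdc i} gives $S'\models\theta'_T$.

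I expect the main obstacle to be the careful handling of the full team in \ref{lemma wdc i}, together with the $T=\emptyset$ stipulation. In particular, one must check that no splitting of a proper subteam can use the $\mathbb{F}$-clause of an atom. That holds simply because subteams of a non-full team are non-full.
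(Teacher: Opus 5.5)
Your proposal is correct and follows essentially the same route as the paper. For (i), you use that on a non-full team every piece of a split must satisfy $\mathsf{p}\subseteq\mathsf{x}^v$ and so lie inside $\{v\}$; for (ii), you give exactly the paper's argument, using $S\models\theta'_S$ for left-to-right and the full team property plus transitivity of $\subseteq$ for right-to-left. Your version is slightly more careful than the paper's, since you explicitly treat the stipulated $\theta'_\emptyset$ and explicitly exclude $S=\mathbb{F}$ via $\mathbb{F}\notin\mathcal{D}$ in the left-to-right direction of (ii).
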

\begin{proof}
  \begin{enumerate}[label=(\roman*)]
  \item Clearly, if $S=\mathbb{F}$ or $S=\emptyset$, then both sides of the equivalence holds. Suppose that neither $S\neq\mathbb{F}$ nor $S\neq\emptyset$. Then  $S\models \bigvee_{v\in T} \mathsf{p}\fullsubseteq \mathsf{x}^v$ if and only if each $v'\in S$ is such that $\{v'\}\models \mathsf{p}\fullsubseteq \mathsf{x}^v$ for some $v\in T$,  which is only the case when $v'=v$. Hence, the equivalence with $S\subseteq T$ is obtained.

\item Suppose that $\mathcal{D}$ and $\mathcal{C}$ are nonempty team properties that do not contain the full team $\mathbb{F}$. Suppose first that $\Glor_{S\in\mathcal{D}}\theta^{\prime}_S\models \Glor_{T\in\mathcal{C}}\theta^{\prime}_T$. Observe that for each $S\in\mathcal{D}$, $S\models\theta^{\prime}_S$ follows by \cref{lemma wdc i}, hence $S\models \Glor_{S\in\mathcal{D}}\theta^{\prime}_S$. Thus by assumption, $S\models \Glor_{T\in\mathcal{C}}\theta^{\prime}_T$ follows, hence $S\models \theta^{\prime}_{T}$ for some $T\in \mathcal{C}$, for which $S\subseteq T$ by \cref{lemma wdc i}.

For the other direction, let $S'\models \Glor_{S\in\mathcal{D}}\theta^{\prime}_S$. We show that $S'\models \Glor_{T\in\mathcal{C}}\theta^{\prime}_T$. If $S'=\mathbb{F}$, we are done by the full team property, so suppose that $S'\neq \mathbb{F}$. Then from \cref{lemma wdc i} and the fact that $S'\models \theta^{\prime}_S$ for some $S\in\mathcal{D}$, it follows that $S'\subseteq S$. By assumption, there is some $T\in\mathcal{C}$ such that $S'\subseteq S\subseteq T$. Again, by \cref{lemma wdc i} we have that $S'\models \theta^{\prime}_T$. We conclude that $S'\models\Glor_{T\in\mathcal{C}}\theta^{\prime}_T$. 
  \end{enumerate}
\end{proof}

We now define the normal form for $\mathcal{C}$ that does not contain the full team, by 
$$\Theta^{\prime}_\mathcal{C}:=\Glor_{T\in\mathcal{C}}\theta^{\prime}_T,$$
and stipulate $\Theta^{\prime}_{\{\mathbb{F}\}}:=\bullet$.

 We adapt the expressive completeness proof for the downward closed logic $PD^{\glor}$ from \cite{Yang16}. 

\begin{theorem}
    $\mathcal{L}_{qd}$ is expressively complete for all 
    quasi downward closed team properties.
\end{theorem}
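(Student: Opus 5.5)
We argue in two directions. First, every formula of $\mathcal{L}_{qd}$ defines a quasi downward closed property, by the earlier proposition on closure properties. Such a property is automatically nonempty, since it contains $\mathbb{F}$. So the work is to show that every quasi downward closed $\mathcal{C}$ is defined by some formula. Our candidate is the normal form $\Theta^{\prime}_{\mathcal{C}\setminus\{\mathbb{F}\}}$.

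The degenerate case comes first. If $\mathcal{C}=\{\mathbb{F}\}$, then $\mathcal{C}\setminus\{\mathbb{F}\}$ is empty, and we use the stipulation $\Theta^{\prime}_{\{\mathbb{F}\}}=\bullet$. Since $T\models\bullet$ iff $T=\mathbb{F}$, we get $\lVert\bullet\rVert=\{\mathbb{F}\}=\mathcal{C}$.

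Now suppose $\{\mathbb{F}\}\subsetneq\mathcal{C}$, and set $\mathcal{C}^-:=\mathcal{C}\setminus\{\mathbb{F}\}$. This is a nonempty team property not containing $\mathbb{F}$. By quasi downward closure it is downward closed, so in particular $\emptyset\in\mathcal{C}^-$. For an arbitrary team $S$ we compute
\begin{align*}
S\models \Glor_{T\in\mathcal{C}^-}\theta^{\prime}_T \iff & S\models\theta^{\prime}_T \text{ for some } T\in\mathcal{C}^-\\
\iff & S=\mathbb{F} \text{ or } S\subseteq T \text{ for some } T\in\mathcal{C}^-\\
\iff & S\in\mathcal{C}.
\end{align*}
The second equivalence is \Cref{lemma wdc} \cref{lemma wdc i}; note that $\mathcal{C}^-$ is nonempty, so the disjunction is nonempty and the case $S=\mathbb{F}$ goes through. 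In the third equivalence, left to right uses $\mathbb{F}\in\mathcal{C}$ together with downward closure of $\mathcal{C}^-$; the subteam $S$ of $T$ is not $\mathbb{F}$, since $T\neq\mathbb{F}$ and $T$ is a team over $\mathbb{P}$. Right to left takes $T=S$ when $S\neq\mathbb{F}$. Hence $\lVert\Theta^{\prime}_{\mathcal{C}^-}\rVert=\mathcal{C}$.

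The main point to check is \Cref{lemma wdc} \cref{lemma wdc i} in the edge case $T=\emptyset$. There we must confirm that the stipulated $\theta^{\prime}_\emptyset=p_1\mathsf{p}\fullsubseteq\top\bot\dots\bot$ is satisfied exactly by $\emptyset$ and $\mathbb{F}$. This holds because $p_1$ would have to take both values, so no nonempty team other than $\mathbb{F}$ satisfies it. We also need the repeated symbol to be allowed in the dual full atoms, and the syntax restriction on repeated symbols applies only to the $\mathsf{x}\subseteq\mathsf{p}$ forms. With that settled, the remaining steps are routine.
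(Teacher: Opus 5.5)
Your proof is correct and follows the paper's argument: $\bullet$ handles $\mathcal{C}=\{\mathbb{F}\}$, and otherwise $\Glor_{T\in\mathcal{C}\setminus\{\mathbb{F}\}}\theta^{\prime}_T$ defines $\mathcal{C}$ by part (i) of \Cref{lemma wdc} together with quasi downward closure. You also make explicit two points the paper leaves implicit: subteams of a non-full $T$ are non-full, and the stipulated $\theta^{\prime}_\emptyset$ is satisfied exactly by $\emptyset$ and $\mathbb{F}$.
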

\begin{proof}
 Every $\phi\in\mathcal{L}_{qd}$ defines a quasi downward closed property that contains the full team, so it remains to show that any quasi downward closed property $\mathcal{C}$ over $\mathbb{P}$ is definable by some formula $\phi\in \mathcal{L}_{qd}$, i.e.,   
$\mathcal{C} = \lVert\phi\rVert$.  

If $\mathcal{C}=\{\mathbb{F}\}$, then the formula $\Theta^{\prime}_\mathcal{C}=\bullet$ clearly defines the property. So suppose that $\{\mathbb{F}\}\subsetneq\mathcal{C}$. Now 
for any team $S$ over $\mathbb{P}$, we have 
\begin{align*}
  S \models  \Glor_{T \in \mathcal{C}\setminus\{\mathbb{F}\}} \theta^{\prime}_{T} \iff & S\models \theta^{\prime}_T \text{ for some } T\in \mathcal{C}\setminus\{\mathbb{F}\}  \\
    \iff & S=\mathsf{F } \text{ or } S\subseteq T \text{ for some } T\in \mathcal{C}\setminus\{\mathbb{F}\} \\
  \iff & S \in \mathcal{C},
\end{align*}
where the second equivalence is by  \Cref{lemma wdc} \cref{lemma wdc i}. Thus $\mathcal{C}=\lVert\Glor_{T \in \mathcal{C}\setminus\{\mathbb{F}\}} \theta^{\prime}_{T}\rVert$, as desired.
\end{proof}

Next, we define a proof system for $\mathcal{L}_{qd}$ in \Cref{rules:wdc_logic}, inspired by the one for $\PLglor$ in \cite{Yang16}.

\begin{table}[ht]\centering \small
  \renewcommand*{\arraystretch}{3.8}
\begin{tabular*}{\linewidth}{@{\extracolsep{\fill}}|ccc|}
\hline

    \AxiomC{} 
     \noLine     
     \UnaryInfC{$\phi\lor\bullet$}
     \RightLabel{$\bullet$I}
     \UnaryInfC{$\bullet$}
\DisplayProof

&
 
     \AxiomC{} 
     \noLine     
     \UnaryInfC{$\bullet$}
     \RightLabel{$\bullet$E}
     \UnaryInfC{$\phi$}
\DisplayProof
&

\hspace{.5cm}

 \\ [2ex]

 \AxiomC{}
\RightLabel{$\top$I}
\UnaryInfC{$\emptyset\fullsubseteq\emptyset$}
\DisplayProof
& 
\AxiomC{}
\noLine
\UnaryInfC{$q\fullsubseteq\top$}
\AxiomC{}
\noLine
\UnaryInfC{$q\fullsubseteq\bot$}
\RightLabel{$\bot$E (1)}
\BinaryInfC{$\psi$}
\DisplayProof
&
\AxiomC{}
\noLine
\UnaryInfC{$\psi\lor qq\fullsubseteq\top\bot$}
\RightLabel{$\bot\lor$E}
\UnaryInfC{$\psi$}
\DisplayProof
\\[2ex] 

 \AxiomC{}
     \noLine  
     \UnaryInfC{$\mathsf{p}q\fullsubseteq\mathsf{x}y$}
     \RightLabel{$\fullsubseteq$Proj}
\UnaryInfC{$\mathsf{p}\fullsubseteq\mathsf{x}$}
\DisplayProof
&

 \AxiomC{}
     \noLine  
     \UnaryInfC{$\mathsf{u}\mathsf{v}\mathsf{w} \fullsubseteq \mathsf{x}\mathsf{y}\mathsf{z}$}
     \RightLabel{$\fullsubseteq$Perm (2)}
\UnaryInfC{$\mathsf{u}\mathsf{w}\mathsf{v}\fullsubseteq\mathsf{x}\mathsf{z}\mathsf{y}$} 
 \DisplayProof
& 
 \AxiomC{}
     \noLine  
     \UnaryInfC{$\mathsf{p}\fullsubseteq\mathsf{x}$}
     \RightLabel{$\fullsubseteq$Ext}
\UnaryInfC{$\mathsf{p}q\fullsubseteq\mathsf{x}\top\lor\mathsf{p}q\fullsubseteq\mathsf{x}\bot$}
\DisplayProof
 \\ [3ex] 

  \hline

\multicolumn{3}{|c|}{
The rules $\land$I, $\land$E, $\glor$I and $\glor$E from \Cref{rules:wuc_logic}.}
 \\ [2ex]

\AxiomC{}
\noLine
\UnaryInfC{$\phi$}
\RightLabel{$\lor$I (1)}
\UnaryInfC{$\phi\lor\psi$}
\DisplayProof
& 
\AxiomC{}
\noLine
\UnaryInfC{$\phi$}
\RightLabel{$\lor$I (1)}
\UnaryInfC{$\psi\lor\phi$}
\DisplayProof
& 
\AxiomC{}
\noLine
\UnaryInfC{$\phi \lor \psi$}
\noLine
\AxiomC{[$\phi$]}
\noLine
\UnaryInfC{\vdots}
\noLine
\UnaryInfC{$\chi$}
\AxiomC{[$\psi$]}
\noLine
\UnaryInfC{\vdots}
\noLine
\UnaryInfC{$\chi$}
\RightLabel{$\lor$E (3)}
\TrinaryInfC{$\chi$}
\DisplayProof\\ [2ex] 
 \AxiomC{}
     \noLine  
     \UnaryInfC{$\phi\lor\psi$}
     \RightLabel{$\lor$Com}
\UnaryInfC{$\psi\lor\phi$}
\DisplayProof
&
\AxiomC{}
 \noLine
 \UnaryInfC{$\phi\lor\psi$}
 \AxiomC{$[\psi]$}
 \noLine
 \UnaryInfC{\vdots}
 \noLine
 \UnaryInfC{$\gamma$} \RightLabel{$\lor$Mon}
 \BinaryInfC{$\phi\lor\gamma$}
 \DisplayProof

  &
  \AxiomC{}
  \noLine
  \UnaryInfC{$\phi\lor(\psi\glor\theta)$}
  \RightLabel{$\lor\glor$Distr}
 \UnaryInfC{$(\phi\lor\psi)\glor(\phi\lor\theta)$}  
 \DisplayProof
\\[2ex] 
 
\hline

 \multicolumn{3}{|l|}{\makecell{(1) $\psi$ is $\bullet$-free. (2) Provided that $|y|=|v|$ and $|z|=|w|$.  (3) $\chi$ is $\glor$-free.}}\\
 
\hline

\end{tabular*}
 \caption{System for $\mathcal{L}_{qd}$.}
\label{rules:wdc_logic}
\end{table}

The rules for the connectives are identical to the system for $\PLglor$ in \cite{Yang16}, except in two aspects. One, the rule deriving $\phi\lor(\psi\lor\chi)\vdash (\phi\lor\psi)\lor\chi$ which can be omitted also from the other system (as observed in \cite{Yang26}); in the process of proving completeness of the system $\PLglor$ in \cite{Yang16}, it is only applied to formulas in $PL$, for which it is derivable by the rules $\lor$E and $\lor$I. Two, for soundness, we restrict the disjunct introduced by $\lor$I to the fragment of $\mathcal{L}_{qd}$ that has the empty team property, i.e., $\bullet$-free. Similarly to their system, soundness of $\lor$E relies on the conclusion being union closed, which the $\glor$-free fragment of $\mathcal{L}_{qd}$ is.

Our rule $\bot\lor$E is a translation of the corresponding rule in \cite{Yang16} to the setting with the full team property, seen through the equality  $\lVert qq\fullsubseteq \top\bot\rVert=\lVert \bot\rVert\cup\{\mathbb{F}\}$. Similarly, the rule $\bot$E translates the corresponding one in \cite{Yang16} to our setting, noticing that 
 $\lVert q\fullsubseteq \top\land q\fullsubseteq\bot\rVert=\{\emptyset,\mathbb{F}\}$, thus for soundness to hold, we also here assume that the conclusion of the rule is from the $\bullet$-free fragment of the logic.  
By $\top$I and $\subseteq$Ext we can derive the law of excluded middle, which with our notation is $q\fullsubseteq\top\lor q\fullsubseteq\bot$. 

We add two novel rules to the system to handle the full dual primitive inclusion atoms: $\fullsubseteq$Proj and $\fullsubseteq$Perm. This is simply to be able to eliminate the hidden conjuncts in the full dual primitive inclusion atom, for instance: $qr\fullsubseteq \bot\top\vdash rq\fullsubseteq\top\bot\vdash r\fullsubseteq\top$, which in $\PLglor$ corresponds to the following derivation $\neg q\land r\vdash \neg q$, derivable by $\land$E. 

Lastly, we add two novel rules to handle $\bullet$: $\bullet$I allowing us to eliminate $\bullet$ from a split-disjunction, and $\bullet$E allowing us to derive any formula from $\bullet$ thanks to the full team property of the logic. 

\begin{theorem} \label{wdc_soundness}
    The rules in \Cref{rules:wdc_logic}
 are sound for quasi downward closed formulas. \end{theorem}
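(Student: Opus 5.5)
We prove soundness in the standard way: by induction on the length of derivations we show that if $\Gamma\vdash\phi$ then $\Gamma\models\phi$. Concretely, for each rule we assume the premises (and, for rules discharging assumptions, the subderivations) are semantically valid relative to the undischarged assumptions $\Gamma$, and verify that every team $T$ with $T\models\Gamma$ satisfies the conclusion. Throughout we use the earlier proposition that every $\mathcal{L}_{qd}$-formula is quasi downward closed, hence has the full team property. We also use two further facts: the $\bullet$-free fragment has the empty team property, and the $\glor$-free fragment is union closed. Union closure follows by an easy induction, since $\lVert\mathsf{p}\fullsubseteq\mathsf{x}\rVert=\lVert\mathsf{p}^{\mathsf{x}}\rVert\cup\{\mathbb{F}\}$ and $\lVert\bullet\rVert$ are union closed, $\lor$ preserves union closure, and so does $\land$ for these particular properties. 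For $\land$, the only problematic unions are those producing $\mathbb{F}$, and $\mathbb{F}$ satisfies everything.

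The rules $\land$I/E, $\glor$I/E, $\lor$Com and $\lor\glor$Distr (via \Cref{fact glor distr}) are routine. For the remaining rules I proceed as follows.

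\textbf{$\bullet$E.} If $T\models\bullet$ then $T=\mathbb{F}$, which satisfies any $\phi$ by the full team property.

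\textbf{$\bullet$I.} Suppose $T\models\phi\lor\bullet$. Then $T=T_1\cup T_2$ with $T_2=\mathbb{F}$, so $T=\mathbb{F}$.

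\textbf{$\lor$I.} Given $T\models\phi$, we split $T=T\cup\emptyset$ and use that the $\bullet$-free $\psi$ is satisfied by $\emptyset$.

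\textbf{$\lor$Mon.} Split $T=T_1\cup T_2$ with $T_2\models\psi$, and apply the hypothesis of the subderivation to $T_2$. Here one must check that $T_2$ still satisfies $\Gamma$. By quasi downward closure this holds for $\Gamma$ unless $T=\mathbb{F}$. In that case, though, the conclusion holds trivially by the full team property.

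\textbf{$\lor$E.} This is the same argument on both halves $T_1,T_2$, followed by union closure of the $\glor$-free $\chi$ to conclude $T_1\cup T_2\models\chi$. Again the case $T=\mathbb{F}$ is handled separately, since subteams of $\mathbb{F}$ need not satisfy $\Gamma$.

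\textbf{Atom rules.} For $\fullsubseteq$Proj and $\fullsubseteq$Perm, note that if $T=\mathbb{F}$ the conclusion holds trivially. Otherwise $T$ satisfies the conjunction of literals $(\mathsf{p}q)^{\mathsf{x}y}$, and projection or permutation of conjuncts preserves this. For $\top$I, the empty sequences make the atom trivially true. For $\fullsubseteq$Ext with $T\neq\mathbb{F}$, we have $T\models\mathsf{p}^{\mathsf{x}}$. Split $T$ into $T_1=\{v\in T: v(q)=1\}$ and $T_2=\{v\in T: v(q)=0\}$; neither is $\mathbb{F}$ unless trivially, and each satisfies the respective extended atom.

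\textbf{$\bot$E.} If $T$ satisfies both $q\fullsubseteq\top$ and $q\fullsubseteq\bot$, then $T\in\{\emptyset,\mathbb{F}\}$. Both teams satisfy any $\bullet$-free $\psi$, by the empty and full team properties respectively.

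\textbf{$\bot\lor$E.} Suppose $T\models\psi\lor qq\fullsubseteq\top\bot$, witnessed by $T=T_1\cup T_2$ with $T_1\models\psi$ and $T_2\in\{\emptyset,\mathbb{F}\}$. If $T_2=\emptyset$ then $T=T_1\models\psi$. If $T_2=\mathbb{F}$ then $T=\mathbb{F}$, and the full team property gives $T\models\psi$.

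The main obstacle I expect is the two rules with discharged hypotheses inside a split disjunction, $\lor$E and $\lor$Mon. There, passing $\Gamma$ down to subteams needs downward closure, which fails exactly at $\mathbb{F}$. The plan is therefore to treat $T=\mathbb{F}$ as a separate case, where the full team property immediately gives the conclusion. For $T\neq\mathbb{F}$, every proper subteam is also $\neq\mathbb{F}$, so quasi downward closure delivers $T_i\models\Gamma$. A secondary point is to verify carefully that $\land$ preserves union closure in this logic, which I would argue using that any failing union must equal $\mathbb{F}$.
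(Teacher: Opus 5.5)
Your proposal is correct and uses the same rule-by-rule semantic verification as the paper. The paper spells out only $\bullet$I and $\fullsubseteq$Ext; your arguments for these two match it, and splitting $T$ by the value of $q$ uniformly even avoids the paper's extra case on atoms $p_ip_i\fullsubseteq\top\bot$. Your remaining cases correctly fill in what the paper calls routine, in particular treating $T=\mathbb{F}$ separately in $\lor$E and $\lor$Mon, where quasi downward closure cannot pass $\Gamma$ to subteams. The one slip is your claim that $\land$ needs special care for union closure: conjunction preserves union closure unconditionally.
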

 \begin{proof}
     Routine proof, we include the cases for $\bullet$I and $\fullsubseteq$Ext. 
    First, we show that $\psi\lor\bullet\vdash\bullet$
     If $T\models \psi\lor\bullet$, then there are subteams $T_1,T_2$ of $T$ such that $T_1\cup T_2=T$ and $T_1\models\phi$ and $T_2\models \bullet$. Thus $T_2=\mathbb{F}$ implying that $T=\mathbb{F}$, from which $T\models\bullet$ follows. 

     Next, we show that $\mathsf{p}\fullsubseteq \mathsf{x}\vdash\mathsf{p}q\fullsubseteq\mathsf{x}\top\lor\mathsf{p}q\fullsubseteq\mathsf{x}\bot$. Let $T\models\mathsf{p}\fullsubseteq \mathsf{x}$ and $T\models p_ip_i\fullsubseteq \top\bot$ for some variable $p_i$ from the sequence $p$, then $T$ is either empty or the full team and thus satisfies the conclusion. So suppose that there is no such $p_i$ and that $T$ is neither the empty nor the full team.
     Consider the subteams $T_1=\{v\in T\mid v(q)=1\}$ and $T_2=\{v\in T\mid v(q)=0\}$. Clearly $T_1\cup T_2=T$, $T_1\models \mathsf{p}q\fullsubseteq\mathsf{x}\top$ and  $T_2\models\mathsf{p}q\fullsubseteq\mathsf{x}\bot$. We conclude that $T\models \mathsf{p}q\fullsubseteq\mathsf{x}\top\lor\mathsf{p}q\fullsubseteq\mathsf{x}\bot$. 
 \end{proof}

We state some interesting derivations in the system. 

\begin{lemma}  \label[lemma]{qd lm derivations}
Let $\alpha,\beta_1$ and $\beta_2$ be from the $\bullet$- and $\glor$-free fragment of $\mathcal{L}_{qd}$.

\begin{enumerate}[label=(\roman*)]
    \item $\phi\lor\bullet\vdashv \bullet$. \label{qd lm derivaitons bullet lor}
    \item $\phi\glor\bullet\vdashv \phi$.\label{qd lm derivaitons bullet glor}
    \item $\mathsf{p}\fullsubseteq\mathsf{x},\mathsf{q}\fullsubseteq\mathsf{y}\vdashv \mathsf{pq}\fullsubseteq\mathsf{xy}$.  \hfill ($\fullsubseteq$Aug)
    \item $\mathsf{p}q\fullsubseteq \mathsf{x}y\vdashv\mathsf{p}qq\fullsubseteq \mathsf{x}yy$.
       \item      $(\phi\lor\psi)\glor(\phi\lor\chi)\vdash \phi\lor(\psi\glor\chi)$. \hfill ($\lor\glor\lor Distr$)
    \item $\phi\land(\psi_1\glor\psi_2)\vdashv (\phi\land\psi_1)\glor(\phi\land\psi_2)$ \hfill ($\land\glor$Distr)
      \item $\alpha\land(\beta\lor\beta_2)\vdashv (\alpha\land\beta_1)\lor(\alpha\land\beta_2)$ \hfill ($\land\lor$Distr)
\end{enumerate}

\end{lemma}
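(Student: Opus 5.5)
I will give a syntactic derivation for each of the seven items, using only the rules of \Cref{rules:wdc_logic}. I will pay attention to the side conditions: $\lor$I needs its added disjunct to be $\bullet$-free, and $\lor$E needs its conclusion to be $\glor$-free.

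\emph{Items (i) and (ii).} For (i), left to right is exactly $\bullet$I. For right to left, $\bullet\vdash\phi\lor\bullet$ is an instance of $\bullet$E; I use $\bullet$E rather than $\lor$I, since $\lor$I would require $\phi$ to be $\bullet$-free. For (ii), left to right is $\glor$E: the first branch is the identity, and the second branch is $\bullet\vdash\phi$ by $\bullet$E. Right to left is $\glor$I.

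\emph{Items (iii) and (iv).} For (iii), left to right: apply $\fullsubseteq$Ext repeatedly to $\mathsf{p}\fullsubseteq\mathsf{x}$, once for each symbol of $\mathsf{q}$. This yields a split disjunction over all extensions $\mathsf{pq}\fullsubseteq\mathsf{x}\mathsf{z}$ with $\mathsf{z}\in\{\top,\bot\}^{|\mathsf{q}|}$. I then eliminate the disjuncts with $\mathsf{z}\neq\mathsf{y}$ by $\lor$Mon. On such a disjunct, together with the assumption $\mathsf{q}\fullsubseteq\mathsf{y}$, $\fullsubseteq$Proj and $\fullsubseteq$Perm give some $q\fullsubseteq\top$ and $q\fullsubseteq\bot$, so by $\bot$E we derive $qq\fullsubseteq\top\bot$. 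The disjuncts are then removed by $\bot\lor$E (with $\lor$Com). Right to left is $\fullsubseteq$Perm followed by $\fullsubseteq$Proj.

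For (iv), right to left is Proj and Perm. Left to right: apply $\fullsubseteq$Ext with a fresh copy of $q$. The disjunct with the wrong value contradicts $q\fullsubseteq y$, which is obtained by Proj and Perm, via $\bot$E. It is then discarded as in (iii).

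I expect this pruning step to be the main obstacle. The worry is that $\fullsubseteq$Ext may formally require $q\notin\mathsf{p}$, which the side condition on repeated symbols may forbid. If it does, I would instead derive $\mathsf{p}qq\fullsubseteq\mathsf{x}yy$ semantically-guided through the normal form $\theta'$. For that I would first prove a lemma that each atom is provably equivalent to $\Theta'$ of its property, mirroring \Cref{quc derivable nf}.

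\emph{Items (v)–(vii).} For (v), I apply $\glor$E to the premise and reduce to $\phi\lor\psi\vdash\phi\lor(\psi\glor\chi)$, and symmetrically for $\chi$. This follows by $\lor$Mon with $\psi\vdash\psi\glor\chi$ from $\glor$I; $\lor$Mon has no side condition, so nothing further is needed.

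For (vi), left to right: use $\land$E to get $\psi_1\glor\psi_2$, then $\glor$E, and in each branch $\land$I followed by $\glor$I. Right to left: use $\glor$E, then $\land$E and $\glor$I, then $\land$I.

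For (vii), right to left: $\lor$E applies because the conclusion $\alpha\land(\beta_1\lor\beta_2)$ is $\glor$-free. In each branch, $\land$E gives $\alpha$ and $\lor$I gives $\beta_1\lor\beta_2$ (using that $\beta_2$ and $\beta_1$ are $\bullet$-free), and the results are combined with $\land$I. Left to right: from $\beta_1\lor\beta_2$ I apply $\lor$Mon twice, once on each side using $\lor$Com, to turn $\beta_i$ into $\alpha\land\beta_i$. This requires $\alpha$ as an available undischarged assumption inside $\lor$Mon. Following Yang, that is exactly where the downward closure of $\alpha$ is used, and I will read $\lor$Mon as allowing such $\glor$-free, $\bullet$-free side assumptions.
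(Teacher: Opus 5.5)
Your proposal is correct and essentially matches the paper's proof. Items (i), (ii) and (vi) use the same rules, and (iii) is the same $\fullsubseteq$Ext/Proj/Perm/$\bot$E argument; the paper discards the contradictory disjuncts directly by $\lor$E, since $\mathsf{pq}\fullsubseteq\mathsf{xy}$ is $\glor$-free, rather than via $\lor$Mon and $\bot\lor$E. The paper gets (iv) by extracting $q\fullsubseteq y$ and applying (iii), and defers (v)--(vii) to Yang's derivations, which you spell out correctly. Your reading of $\lor$Mon (and $\lor$E) as allowing undischarged side assumptions is legitimate: the paper imposes no such restriction, and soundness holds because every $\mathcal{L}_{qd}$-formula is quasi downward closed with the full team property. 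Your freshness worry about $\fullsubseteq$Ext is unfounded. $\mathcal{L}_{qd}$ explicitly allows repeated symbols (e.g.\ $qq\fullsubseteq\top\bot$ in $\bot\lor$E), and the paper's own route to (iv) through (iii) applies $\fullsubseteq$Ext to a symbol already present. So your fallback is unnecessary, and it would have been circular, since deriving an atom with a repeated symbol from a normal form needs (iv) itself.
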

\begin{proof}
\begin{enumerate}[label=(\roman*)]
     \item By $\bullet$I and $\bullet$E.
     \item The left-to-right direction is by $\glor$E and $\bullet$E, and the opposite direction by $\glor$I.
    \item The right-to-left direction is by $\fullsubseteq$Perm and $\fullsubseteq$Proj. For the other direction, we use the rules $\fullsubseteq$Ext, $\lor$E, $\fullsubseteq$Perm, $\fullsubseteq$Proj and $\bot$E. We show in detail the derivation for the unary case $p\fullsubseteq\top,q\fullsubseteq\bot\vdash pq\fullsubseteq\top\bot$ in \Cref{derivability of augmentation}. 
\label{lemma derivability of augmentation}
    
\begin{table}[ht] \tiny
  \renewcommand*{\arraystretch}{8}
\hspace*{-3.5cm}\begin{tabular*}{7.96in}{@{\extracolsep{\fill}}|c|}
\hline

\AxiomC{$p\fullsubseteq\top$}
\RightLabel{$\fullsubseteq$Ext}
\UnaryInfC{$pq\fullsubseteq\top\top\lor pq\fullsubseteq\top\bot$}
\AxiomC{$q\fullsubseteq\bot$}
\RightLabel{$\fullsubseteq$Ext,$\fullsubseteq$Perm (1)}
\UnaryInfC{$pq\fullsubseteq\top\bot\lor pq\fullsubseteq\bot\bot$}

\AxiomC{$[pq\fullsubseteq\top\bot]$}

\AxiomC{$[pq\fullsubseteq\top\top]$}
\RightLabel{$\fullsubseteq$Proj,$\fullsubseteq$Perm}
\UnaryInfC{$p\fullsubseteq\top$}
\AxiomC{$[pq\fullsubseteq\bot\bot]$}
\RightLabel{$\fullsubseteq$Proj,$\fullsubseteq$Perm}
\UnaryInfC{$p\fullsubseteq\bot$}
\RightLabel{$\bot$E}
\BinaryInfC{$pq\fullsubseteq\top\bot$}

\RightLabel{$\lor$E}
\TrinaryInfC{$pq\fullsubseteq\top\bot$}

\AxiomC{$[pq\fullsubseteq\top\bot]$}
\RightLabel{$\lor$E}
\TrinaryInfC{$pq\fullsubseteq\top\bot$}
 \vspace*{.2cm}
\DisplayProof \\
 \hline
\end{tabular*}
 \caption{}
\label{derivability of augmentation}
\end{table}  
    \item By $\fullsubseteq$Proj, $\fullsubseteq$Perm and \cref{lemma derivability of augmentation}.
     \item[(v)-(vii)]    As in \cite{Yang16}, using the rules for the connectives.
     %
\end{enumerate}
 
\end{proof}



    


Before the completeness proof, we show that any formula in $\mathcal{L}_{qd}$ is provably equivalent to one in the normal form. 

\begin{lemma} \label[lemma]{qd nf lemma}
        For $\phi\in \mathcal{L}_{qu}$, there is a nonempty team property $\mathcal{C}$ that is either $\mathbb{F}$ or does not contain the full team,  such that $\phi\dashv\vdash \Theta^{\prime}_\mathcal{C}=\Glor_{T\in\mathcal{C}}\theta^{\prime}_T$.
\end{lemma}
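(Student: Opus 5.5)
We argue by induction on the complexity of $\phi\in\mathcal{L}_{qd}$ (the statement's $\mathcal{L}_{qu}$ is evidently a typo for $\mathcal{L}_{qd}$, and ``either $\mathbb{F}$'' should be read as $\{\mathbb{F}\}$). The invariant maintained is that $\mathcal{C}$ is either $\{\mathbb{F}\}$ or does not contain $\mathbb{F}$. The inductive plan follows the proof of \Cref{quc derivable nf} and Yang's normal form proof for $\PLglor$. Throughout we use soundness (\Cref{wdc_soundness}) only to read off semantic facts; every derivation is built from the rules of \Cref{rules:wdc_logic} and \Cref{qd lm derivations}.

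\textbf{Base cases.} If $\phi=\bullet$, take $\mathcal{C}=\{\mathbb{F}\}$; then $\Theta'_{\mathcal{C}}=\bullet$ and the derivation is trivial.

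If $\phi=\mathsf{q}\fullsubseteq\mathsf{y}$, take $\mathcal{C}=\lVert\mathsf{q}\fullsubseteq\mathsf{y}\rVert\setminus\{\mathbb{F}\}$. This is the downward closed family of subteams of $T_0=\{v\mid v(\mathsf{q})=v(\mathsf{y})\}$; it equals $\{\emptyset\}$ if $\mathsf{q}$ has a clash, and it always contains $\emptyset$.
\begin{itemize}
\item[$\vdash$:] Apply $\fullsubseteq$Ext repeatedly, together with $\fullsubseteq$Perm and $\lor$Mon, to extend $\mathsf{q}$ to the full sequence $\mathsf{p}$. This yields $\bigvee_{v\in T_0}\mathsf{p}\fullsubseteq\mathsf{x}^v=\theta'_{T_0}$ (or $\theta'_\emptyset$ via \Cref{qd lm derivations}(iv) and $\fullsubseteq$Aug in the clashing case). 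Then $\glor$I gives $\Theta'_{\mathcal{C}}$.
\item[$\dashv$:] Use $\glor$E. For each $T\subseteq T_0$, derive $\theta'_T\vdash\mathsf{q}\fullsubseteq\mathsf{y}$ by $\lor$E: each disjunct $\mathsf{p}\fullsubseteq\mathsf{x}^v$ yields $\mathsf{q}\fullsubseteq\mathsf{y}$ by Perm and Proj, and the conclusion is $\glor$-free. For $\theta'_\emptyset$, use Proj/Perm to reach $p_1\fullsubseteq\top$ and $p_1\fullsubseteq\bot$, then apply $\bot$E.
\end{itemize}

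\textbf{Inductive step.} Assume $\phi_i\dashv\vdash\Theta'_{\mathcal{D}_i}$. The special cases where some $\mathcal{D}_i=\{\mathbb{F}\}$ are handled as follows:
\begin{itemize}
\item for $\glor$, by \Cref{qd lm derivations}(ii);
\item for $\lor$, by (i), giving $\bullet$;
\item for $\land$, by $\bullet$E and $\land$E, giving $\Theta'_{\mathcal{D}_j}$ on the other side.
\end{itemize}
Otherwise the cases are:
\begin{itemize}
\item $\glor$: $\mathcal{C}=\mathcal{D}_1\cup\mathcal{D}_2$, by $\glor$I and $\glor$E.
\item $\land$: $\mathcal{C}=\{S\cap T\mid S\in\mathcal{D}_1,\,T\in\mathcal{D}_2\}$. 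Using $\land\glor$Distr, it suffices to show $\theta'_S\land\theta'_T\dashv\vdash\theta'_{S\cap T}$. Distribute with $\land\lor$Distr (legitimate, since these formulas are $\bullet$- and $\glor$-free) into a split disjunction of conjunctions $\mathsf{p}\fullsubseteq\mathsf{x}^v\land\mathsf{p}\fullsubseteq\mathsf{x}^w$. When $v=w$ this reduces to one atom. When $v\neq w$, Proj/Perm give $p_i\fullsubseteq\top$ and $p_i\fullsubseteq\bot$, hence $\theta'_\emptyset$ by $\bot$E, and that disjunct is absorbed using $\bot\lor$E (after Proj/Perm down to $p_1p_1\fullsubseteq\top\bot$). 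The converse direction uses $\lor$Mon and $\land$I with the fact that every $v\in S\cap T$ is in both.
\item $\lor$: $\mathcal{C}=\{S\cup T\mid S\in\mathcal{D}_1,\,T\in\mathcal{D}_2\}$. Applying $\lor\glor$Distr twice and \Cref{qd lm derivations}(v) reduces this to $\theta'_S\lor\theta'_T\dashv\vdash\theta'_{S\cup T}$. That holds by reassociating and merging duplicate disjuncts through $\lor$E/$\lor$I, which is fine because all of these are $\glor$-free and $\bullet$-free, and by removing $\theta'_\emptyset$ disjuncts with $\bot\lor$E. Finally, the unions (respectively intersections) of non-full teams are non-full, except that a union $S\cup T$ may equal $\mathbb{F}$.
\end{itemize}

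\textbf{Main obstacle.} The hardest step is this last point, together with the $\lor$ case generally. If $S\cup T=\mathbb{F}$, then $\theta'_{\mathbb{F}}$ is not allowed in a normal form with $\mathbb{F}\notin\mathcal{C}$. Semantically $\theta'_{\mathbb{F}}$ is valid, so the whole formula is valid, and we must instead show $\theta'_S\lor\theta'_T\dashv\vdash\Theta'_{\mathcal{P}(\mathbb{F})\setminus\{\mathbb{F}\}}$. I would try to derive this via the excluded-middle derivation from $\top$I and $\fullsubseteq$Ext, producing $\theta'_{\mathbb{F}}$ from $\emptyset\fullsubseteq\emptyset$. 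The remaining difficulty is converting $\theta'_{\mathbb{F}}$ into a $\glor$ over proper subteams, which I expect to require a careful use of $\bullet$I/$\bullet$E. Alternatively, one could relax the invariant to allow $\mathbb{F}$ in $\mathcal{C}$ and adjust the normal form accordingly, noting that \Cref{lemma wdc}(i) still holds for $T=\mathbb{F}$.
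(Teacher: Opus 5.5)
\textbf{Verdict.} Your proposal follows the paper's proof, but it has a genuine gap: the obstacle you flag at the end is real, and with the rules of \Cref{rules:wdc_logic} it cannot be closed.

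\textbf{Where the gap is.} Your induction is essentially the paper's own. It has the same base cases and the same handling of $\bullet$ via $\bullet$I and $\bullet$E. The constructions also match: the union of the families for $\glor$, pairwise intersections via $\land\glor$Distr and $\land\lor$Distr for $\land$, and pairwise unions via $\lor\glor$Distr, with duplicates merged by $\lor$E and $\bot\lor$E, for $\lor$. The gap occurs when $T\cup S=\mathbb{F}$ in the $\lor$ case. It also occurs in your atom case. For $\sempty\fullsubseteq\sempty$ (the conclusion of $\top$I) you get $T_0=\mathbb{F}\notin\mathcal{C}$, so your final $\glor$I step has no disjunct to land in. The paper's proof passes over the same point silently: its $\lor$ case writes $\Glor_{(T,S)}\theta^{\prime}_{T\cup S}$ without excluding $T\cup S=\mathbb{F}$, and its atom case yields $\{T\}$ with $T=\mathbb{F}$.

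\textbf{Why your first remedy cannot work.} Every rule of \Cref{rules:wdc_logic} stays sound under a modified semantics. Evaluate formulas over $\mathbb{P}$ on teams over $\mathbb{P}'=\mathbb{P}\cup\{r\}$, with $r$ fresh, and let $\bullet$ and the full-team clause of $\fullsubseteq$ refer to $2^{\mathbb{P}'}$. Soundness still holds because the soundness arguments only use the following facts, all of which persist:
the full team property, the empty team property of $\bullet$-free formulas, closure of $\glor$-free formulas under binary unions, quasi downward closure, and the classical behaviour of atoms on non-full teams.

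Now take the team $R=\{v\in 2^{\mathbb{P}'}\mid v(r)=1\}$, which is not full. Any $R'\subseteq R$ satisfying $\mathsf{p}\fullsubseteq\mathsf{x}^v$ therefore consists of valuations agreeing with $v$ on $\mathbb{P}$. The restriction of $R$ to $\mathbb{P}$ is all of $\mathbb{F}$, so $R\not\models\theta^{\prime}_T$ for every $T\neq\mathbb{F}$, including $T=\emptyset$. Hence $R\not\models\Theta^{\prime}_{\mathcal{C}}$ whenever $\mathbb{F}\notin\mathcal{C}$.

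The formula $p\fullsubseteq\top\lor p\fullsubseteq\bot$ is valid in both semantics. So it derives no $\Theta^{\prime}_{\mathcal{C}}$ with $\mathbb{F}\notin\mathcal{C}$. It does not derive $\bullet$ either, since the empty team refutes that. The lemma as stated therefore fails for this $\phi$. In particular, the step $\theta^{\prime}_{\mathbb{F}}\vdash\Glor_{T\neq\mathbb{F}}\theta^{\prime}_T$ that your first remedy needs is underivable, however $\bullet$I and $\bullet$E are used.

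\textbf{Why your second remedy does not rescue the statement.} Allowing $\mathbb{F}\in\mathcal{C}$ is what the paper's derivations actually establish, but it is a weaker statement. It collides with the stipulation $\Theta^{\prime}_{\{\mathbb{F}\}}:=\bullet$, so $\bullet$ would have to be tracked separately. Worse, \Cref{lemma wdc}~(ii) fails once $\mathbb{F}\in\mathcal{D}$. For example, $\theta^{\prime}_{\mathbb{F}}\models\Theta^{\prime}_{\mathcal{P}(\mathbb{F})\setminus\{\mathbb{F}\}}$, yet no member of the latter contains $\mathbb{F}$. So this version cannot feed the completeness argument. Indeed, the same countermodel shows that for $\mathbb{P}=\{p\}$ the valid formula $p\fullsubseteq\top\glor p\fullsubseteq\bot$ is not derivable.

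\textbf{A possible repair.} To get the lemma as stated you need an additional rule expressing that $\mathbb{P}$ is exhausted, for instance the axiom $\Glor_{u\in\mathbb{F}}\theta^{\prime}_{\mathbb{F}\setminus\{u\}}$. With it, both the atom $\sempty\fullsubseteq\sempty$ and any pair with $T\cup S=\mathbb{F}$ can be sent to the family $\{\mathbb{F}\setminus\{u\}\mid u\in\mathbb{F}\}$.
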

\begin{proof}
The case for $\bullet$ and $\fullsubseteq$-atoms are new, while the other induction steps (modulo the handling of subformulas $\bullet$) are essentially due to the corresponding result for $\PLglor$ in \cite{Yang16}.

\begin{enumerate}[label=\textbf{-}]
    \item If $\phi=\bullet$, then for $\Theta^{\prime}_{\{\mathbb{F}\}}=\bullet$, the derivation is trivial. 

    \item Let $\phi=\mathsf{q}\fullsubseteq\mathsf{y}$. If by $\fullsubseteq$Proj and 
    $\bot$E we can obtain $\mathsf{q}\fullsubseteq\mathsf{y}\vdashv rr\fullsubseteq y_iy_j$ with $y_i\neq y_j$, then we also have $rr\fullsubseteq y_iy_j\vdashv p_1\mathsf{p}\fullsubseteq \top\bot\dots\bot$: the left-to right direction is by $\lor$I and $\bot\lor$E, and for the right-to-left direction we use $\fullsubseteq$Proj, $\lor$I and $\bot\lor$E. So suppose not, then $T=\{v\mid v\models \mathsf{q}^\mathsf{x}\}$ is nonempty.
    Now $\mathsf{q}\fullsubseteq\mathsf{y}\vdashv \bigvee_{v\in T}\mathsf{p}\fullsubseteq\mathsf{x}^v=\Theta^{\prime}_{\{t\}}$, where the left-to-right direction follows by $\fullsubseteq$Perm and $\fullsubseteq$Ext, and the right-to-left direction by $\fullsubseteq$Proj, $\fullsubseteq$Perm and $\lor$E. 
    

 \item[(IH)] Let $\mathcal{D}_1,\mathcal{D}_2$ be nonempty team properties such that $\phi_1\dashv\vdash \Theta^{\prime}_{\mathcal{D}_1}$ and $\phi_2\dashv\vdash \Theta^{\prime}_{\mathcal{D}_2}$. Assume additionally that each of $\mathcal{D}_1$ and $\mathcal{D}_2$ either is $\{\mathbb{F}\}$ or does not contain the full set.
 
    \item Let $\phi=\phi_1\glor\phi_2$. If  $\mathcal{D}_2=\{\mathbb{F}\}$, then it suffices to derive $\Theta^{\prime}_{\mathcal{D}_1}\glor\bullet\vdashv \Theta^{\prime}_{\mathcal{D}_1}$, whose left-to-right direction by $\glor$E and $\bullet$E and right-to left by $\glor$I. The case when $\mathcal{D}_1=\{\mathbb{F}\}$ is analogous, so we can assume that neither team property contains the full team. By $\glor$E and $\glor$I, it follows that $\phi_1\glor\phi_2\vdashv\Glor_{T\in\mathcal{C}}\theta^{\prime}_T\glor\Glor_{S\in\mathcal{D}}\theta^{\prime}_S\vdashv \Glor_{T'\in\mathcal{C}\cup\mathcal{D}}\theta^{\prime}_{T'}$. 


        
    \item Let $\phi=\phi_1\lor\phi_2$. If $\mathcal{D}_2=\{\mathbb{F}\}$, we have that  $\phi_1\lor\phi_2\vdashv \phi_1\lor\bullet\vdashv \bullet=\Theta^{\prime}_{\{\mathbb{F}\}}$, with the left-to-right direction by $\lor$Mon and $\bullet$I, and the right-to-left direction by $\bullet$E. The cases when $\mathcal{D}_1=\{\mathbb{F}\}$ is similar. Suppose now that neither $\mathcal{D}_1$ nor $\mathsf{D}_2$ contains the full team. 
  We have the following derivation.
    \begin{align*}
        \phi_1\lor\phi_2
        \vdashv& \Glor_{T\in\mathcal{D}_1}\theta^{\prime}_T\lor \Glor_{S\in\mathcal{D}_2}\theta^{\prime}_S \hspace{2cm} (\lor Mon) \\
        \vdash& \Glor_{T\in\mathcal{D}_1}(\theta^{\prime}_T\lor(\Glor_{S\in\mathcal{D}_2}\theta^{\prime}_S)) \hspace{2cm} (\lor\glor Distr, \lor Com) \\
        \vdash&\Glor_{T\in\mathcal{D}_1}\Glor_{S\in\mathcal{D}_2}(\theta^{\prime}_T\lor\theta^{\prime}_S) \hspace{2cm} (\lor\glor Distr) \\
       \vdash & \Glor_{(T,S)\in\mathcal{D}_1\times\mathcal{D}_2}\theta^{\prime}_{T\cup S}\hspace{2cm} (\lor E,\, \bot\lor E)
    \end{align*}
We use $\lor$E for the last step to remove repeated (split)-disjuncts $\mathsf{p}\fullsubseteq \mathsf{x}^v$ in case $T\cap S\neq\emptyset$, remembering that for all teams $T'$, the characteristic formulas $\theta^{\prime}_{T'}$ are from the $\glor$-free fragment. Similarly, if $T=\emptyset$ or $S=\emptyset$, we use $\bot\lor$E to remove all but one global disjunct $\theta^{\prime}_\emptyset$. 

We can reverse the derivation by first using $\lor$I, then $\lor\glor\lor$Distr, and finally $\lor$Mon. Note that the empty team property required by $\lor$I is satisfied by the formulas.  



\item Let $\phi=\phi_1\land\phi_2$.
If $\mathcal{D}_2=\{\mathbb{F}\}$, we have that  $\phi_1\land\phi_2\vdashv \phi_1\land\bullet\vdashv \bullet=\Theta^{\prime}_{\{\mathbb{F}\}}$ by $\land$E, $\land$I and $\bullet$E. Let us now assume that neither $\mathcal{D}_1$ nor $\mathsf{D}_2$ contains the full team. We show that
$\phi_1\land\phi_2\vdashv\Glor_{T'\in\mathcal{D}_1\cap\mathcal{D}_2}\theta^{\prime}_{T'}$. 

    \begin{align*}
        \phi_1\land\phi_2
        \vdashv& \Glor_{T\in\mathcal{D}_1}\theta^{\prime}_T\land \Glor_{S\in\mathcal{D}_2}\theta^{\prime}_S \hspace{3.7cm} (\land I, \land E) \\        \vdashv&\Glor_{T\in\mathcal{D}_1}\Glor_{S\in\mathcal{D}_2}(\theta^{\prime}_T\land\theta^{\prime}_S) \hspace{3.5cm} (\land\glor Distr) \\
       \vdashv & \Glor_{T\in\mathcal{D}_1}\Glor_{S\in\mathcal{D}_2}\bigvee_{v\in T}\bigvee_{w\in S}(\mathsf{p}\fullsubseteq\mathsf{x}^v\land\mathsf{p}\fullsubseteq\mathsf{x}^w) \hspace{1cm} (\lor I, \lor E, \land\lor Distr) \\
       \vdash & \Glor_{T\in\mathcal{D}_1}\Glor_{S\in\mathcal{D}_2}\bigvee_{v'\in T\cap S}(\mathsf{p}\fullsubseteq\mathsf{x}^{v'}\land\mathsf{p}\fullsubseteq\mathsf{x}^{v'}) \hspace{1cm} (\bot\lor E (\fullsubseteq Proj/Perm/Aug)) \\
         \vdash & \Glor_{T\in\mathcal{D}_1}\Glor_{S\in\mathcal{D}_2}\bigvee_{v'\in T\cap S}(\mathsf{p}\fullsubseteq\mathsf{x}^{v'}) \hspace{2.5cm} (\land E) \\
        = & \Glor_{(T,S)\in\mathcal{D}_1\times\mathcal{D}_2}\theta^{\prime}_{T\cap S}.
    \end{align*}
    Where in the second to last step, we eliminate all split-disjuncts of the form $(\mathsf{p}\fullsubseteq\mathsf{x}^v\land\mathsf{p}\fullsubseteq\mathsf{x}^w)$ for which $v\neq w$. For the right-to-left direction, reverse the first steps by using $\land I$ and $\lor$I.
\end{enumerate}
\end{proof}

We are now ready to prove completeness of the system.

\begin{theorem}
Let $\Sigma\cup\{\psi\}$ be a set if formulas in $\mathcal{L}_{qd}$. Then $\Sigma\models\psi$ implies $\Sigma\vdash\psi$.
\end{theorem}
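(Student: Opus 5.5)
We will follow the pattern of the completeness proofs for $\mathcal{L}_{qu}$ and $\mathcal{L}_u$. First, by compactness of propositional team logics (the arguments of \cite{ciardelli2009,YANG2017}), it suffices to treat a finite $\Sigma$. Since $\land$I and $\land$E let us replace a finite $\Sigma$ by the single conjunction $\gamma:=\bigwedge\Sigma$, we reduce to showing that $\gamma\models\psi$ implies $\gamma\vdash\psi$.

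By \Cref{qd nf lemma} there are team properties $\mathcal{D}$ and $\mathcal{C}$, each either $\{\mathbb{F}\}$ or not containing $\mathbb{F}$, such that $\gamma\dashv\vdash\Theta^{\prime}_{\mathcal{D}}$ and $\psi\dashv\vdash\Theta^{\prime}_{\mathcal{C}}$. By soundness (\Cref{wdc_soundness}) we get $\Theta^{\prime}_{\mathcal{D}}\models\Theta^{\prime}_{\mathcal{C}}$. It then remains to derive $\Theta^{\prime}_{\mathcal{D}}\vdash\Theta^{\prime}_{\mathcal{C}}$ and chain the derivations.

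We split into cases according to the full team.
\begin{enumerate}[label=(\alph*)]
\item If $\mathcal{D}=\{\mathbb{F}\}$, then $\Theta^{\prime}_{\mathcal{D}}=\bullet$, and $\bullet\vdash\Theta^{\prime}_{\mathcal{C}}$ is immediate by $\bullet$E.
\item If $\mathcal{C}=\{\mathbb{F}\}$ but $\mathcal{D}\neq\{\mathbb{F}\}$, then $\lVert\Theta^{\prime}_{\mathcal{D}}\rVert\subseteq\{\mathbb{F}\}$. This is impossible: $\mathcal{D}$ is nonempty and avoids $\mathbb{F}$, so for any $S\in\mathcal{D}$ we have $S\models\theta^{\prime}_S$ by \Cref{lemma wdc}\cref{lemma wdc i}, with $S\neq\mathbb{F}$. (We also use $\emptyset\models\theta'_S$, since every $\theta'$ has the empty team property.) So this case does not arise.
\item If neither property contains $\mathbb{F}$, then \Cref{lemma wdc}\cref{lemma wdc ii} gives, for each $S\in\mathcal{D}$, some $T\in\mathcal{C}$ with $S\subseteq T$. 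By $\glor$E it suffices to show $\theta^{\prime}_S\vdash\theta^{\prime}_T$ and then close with $\glor$I.
\end{enumerate}

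The main work, and the step needing care, is deriving $\theta^{\prime}_S\vdash\theta^{\prime}_T$ when $S\subseteq T$. If $S=T$ the derivation is trivial. If $S\neq\emptyset$, then $\theta^{\prime}_S=\bigvee_{v\in S}\mathsf{p}\fullsubseteq\mathsf{x}^v$, and the split disjuncts $\mathsf{p}\fullsubseteq\mathsf{x}^v$ for $v\in T\setminus S$ must be added. For these we use $\lor$I, which is allowed since the added disjuncts are $\bullet$-free. We then rearrange with $\lor$Com and $\lor$Mon, handling associativity via $\lor$E and $\lor$I in the $\glor$-free fragment as noted after \Cref{rules:wdc_logic}.

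If $S=\emptyset$, we must derive $\theta^{\prime}_T$ from $\theta^{\prime}_\emptyset=p_1\mathsf{p}\fullsubseteq\top\bot\dots\bot$. By $\fullsubseteq$Perm and $\fullsubseteq$Proj we obtain $p_1p_1\fullsubseteq\top\bot$, and from this $p_1\fullsubseteq\top$ and $p_1\fullsubseteq\bot$ by $\fullsubseteq$Proj. Then $\bot$E yields any $\bullet$-free conclusion, in particular $\theta^{\prime}_T$.

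I expect the subtle point to be the bookkeeping of the side conditions. $\lor$I and $\bot$E require $\bullet$-free formulas, and $\lor$E requires a $\glor$-free conclusion. All $\theta^{\prime}$ formulas satisfy both conditions, so these restrictions should not block the derivations. Combining everything gives $\gamma\vdash\Theta^{\prime}_{\mathcal{D}}\vdash\Theta^{\prime}_{\mathcal{C}}\vdash\psi$, hence $\Sigma\vdash\psi$.
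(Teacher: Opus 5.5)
Your proof is correct and follows essentially the same route as the paper: compactness, reduction to the normal forms of \Cref{qd nf lemma} plus soundness, the two $\bullet$ cases, and then \Cref{lemma wdc}\cref{lemma wdc ii} combined with $\lor$I, $\glor$I and $\glor$E. You are also somewhat more careful than the paper. It writes $\theta^{\prime}_T\lor\theta^{\prime}_{S\setminus T}=\theta^{\prime}_S$ without addressing reordering of disjuncts or the special formula $\theta^{\prime}_\emptyset$, whereas you handle the rearrangement explicitly and treat the empty-subteam case via $\fullsubseteq$Proj/Perm and $\bot$E.
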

\begin{proof}
By compactness, it suffices to prove that if $\gamma\models\phi$, then $\gamma\vdash\phi$. So suppose that $\gamma\models\phi$. Then by \Cref{qd nf lemma} there are nonempty team properties $\mathcal{D}_1$ and $\mathcal{D}_2$ that are either $\{\mathbb{F}\}$ or do not contain the full team, such that $\phi_1\vdashv\Theta^{\prime}_{\mathcal{D}_1}$ and $\phi_2\vdashv\Theta^{\prime}_{\mathcal{D}_2}$.  Hence by soundness $\Theta^{\prime}_{\mathcal{D}_1} \models\Theta^{\prime}_{\mathcal{D}_2}$, and it suffices to derive $\Theta^{\prime}_{\mathcal{D}_1} \vdash\Theta^{\prime}_{\mathcal{D}_2}$

If $\mathcal{D}_1=\{\mathbb{F}\}$, then we derive $\Theta^{\prime}_{\mathcal{D}_1}= \bullet \vdash \Theta^{\prime}_{\mathcal{D}_2}$ by $\bullet$E, and if $\mathcal{D}_2=\{\mathbb{F}\}$, then also $\mathcal{D}_1=\{\mathbb{F}\}$ hence the result is immediate. 

 We can now assume that  $\mathcal{D}_1$ and $\mathcal{D}_2$ do not contain $\mathbb{F}$. By \Cref{lemma wdc} \cref{lemma wdc ii}, we have that for each  $T\in\mathcal{D}_1$ there is some $S\in\mathcal{D}_2$ such that $S\supseteq T$. 
Now $\theta^{\prime}_T\vdash \theta^{\prime}_T\lor\theta^{\prime}_{S\setminus T}=\theta^{\prime}_S$ by $\lor$I which we can use since $\theta^{\prime}_{S\setminus T}$ is $\bullet$-free. We then apply $\glor$I to obtain the derivation $\theta^{\prime}_S\vdash\Theta^{\prime}_{\mathcal{D}_2}$. We conclude by $\glor$E that $\Theta^{\prime}_{\mathcal{D}_1}\vdash\Theta^{\prime}_{\mathcal{D}_2}$, hence $\gamma\vdash\phi$. 
 \end{proof}




 \subsection{Downward closed logic}

 We introduce the logic $\mathcal{L}_d$ to complete the dual picture of the (quasi) upward and downward closed logics we consider in this section. We briefly discuss the expressive completeness and complete proof system for this logic, since they essentially follow from the results for $\PLglor$ in \cite{Yang16}. Recall that $\PLglor$ has the syntax $\phi::= \bot\mid p \mid \neg p \mid (\phi\land\phi) \mid (\phi\lor\phi)\mid (\phi\glor\phi)$, and that the syntax of the downward closed logic $\mathcal{L}_d$ is 
\[\phi::= \bot \mid \mathsf{p}\subseteq \mathsf{x} \mid (\phi\land\phi) \mid (\phi\lor\phi)\mid (\phi\glor\phi).\]

We can use the same semantic arguments as for $\PLglor$ in \cite{Yang16} due to the following equivalences $q\subseteq\top\equiv q$, and $q\subseteq\bot\equiv \neg q$. 

Let $\mathsf{p}$ contain all propositional symbols from $\mathbb{P}$. Since $\mathsf{p}\subseteq\mathsf{x}\equiv\mathsf{p}^\mathsf{x}$, we directly translate the normal form for $\PLglor$ in \cite{Yang16} to obtain 
$$\Theta^{*}_{\mathcal{C}}:=\Glor_{T \in \mathcal{C}} \bigvee_{v\in T} \mathsf{p}\subseteq \mathsf{x}^v,$$

which captures the downward closure of a nonempty team property $\mathcal{C}$. Thus, we omit the proof and simply state the expressive completeness result for $\mathcal{L}_d$. 

\begin{theorem}
    $\mathcal{L}_{d}$ is expressively complete for all nonempty downward closed team properties $\mathcal{C}$.
\end{theorem}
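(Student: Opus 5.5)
The proof has two directions, and the plan is to reduce both to the known case of $\PLglor$ from \cite{Yang16}.

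\textbf{Soundness of the expressive claim.} By the earlier proposition, every $\phi\in\mathcal{L}_d$ defines a downward closed property. Each such property contains $\emptyset$, since every atom and $\bot$ has the empty team property and $\land,\lor,\glor$ preserve it. In particular $\lVert\phi\rVert$ is nonempty. So it remains to show that every nonempty downward closed $\mathcal{C}$ equals $\lVert\phi\rVert$ for some $\phi\in\mathcal{L}_d$.

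\textbf{Characterising the disjuncts.} Fix $\mathsf{p}$ enumerating $\mathbb{P}$ and put $\theta^*_T:=\bigvee_{v\in T}\mathsf{p}\subseteq\mathsf{x}^v$, with $\theta^*_\emptyset:=\bot$ (the empty split disjunction). I would first prove the analogue of \Cref{lemma wdc} \cref{lemma wdc i}:
\[
S\models\theta^*_T \quad\text{iff}\quad S\subseteq T.
\]
For this, use $\mathsf{p}\subseteq\mathsf{x}^v\equiv\mathsf{p}^{\mathsf{x}^v}$. Since $\mathsf{p}$ lists all symbols, a team satisfies $\mathsf{p}\subseteq\mathsf{x}^v$ iff it is a subset of $\{v\}$.
\begin{itemize}
\item[$(\Rightarrow)$] If $S\models\bigvee_{v\in T}\mathsf{p}\subseteq\mathsf{x}^v$, then $S=\bigcup_{v\in T}S_v$ with each $S_v\subseteq\{v\}$. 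Hence $S\subseteq T$.
\item[$(\Leftarrow)$] If $S\subseteq T$, split $S$ as $S_v=S\cap\{v\}$ for $v\in T$. Each $S_v$ satisfies its disjunct, including when $S_v$ is empty, by the empty team property.
\end{itemize}
The case $T=\emptyset$ holds because $\bot$ is satisfied only by $\emptyset$.

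\textbf{Assembling the normal form.} Given a nonempty downward closed $\mathcal{C}$, take $\Theta^*_\mathcal{C}=\Glor_{T\in\mathcal{C}}\theta^*_T$; this is a finite disjunction since $\mathbb{P}$ is finite. Then
\begin{align*}
S\models\Theta^*_\mathcal{C}
&\iff S\models\theta^*_T \text{ for some } T\in\mathcal{C}\\
&\iff S\subseteq T \text{ for some } T\in\mathcal{C}\\
&\iff S\in\mathcal{C}.
\end{align*}
The second step uses the lemma above. The third step uses downward closure for the forward direction and $T=S$ for the converse.

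\textbf{Where care is needed.} The only step needing attention is the split-disjunction computation, specifically that empty parts of the split are harmless. This follows from the empty team property of the dual primitive atoms, so I expect no real obstacle. Alternatively, one could translate each formula of $\PLglor$ into $\mathcal{L}_d$ via $q\mapsto q\subseteq\top$ and $\neg q\mapsto q\subseteq\bot$, and invoke the expressive completeness result of \cite{Yang16} directly.
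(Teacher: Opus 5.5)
Your proof is correct and takes essentially the paper's approach. The paper simply transfers Yang's normal form for $\PLglor$ via the equivalence $\mathsf{p}\subseteq\mathsf{x}\equiv\mathsf{p}^\mathsf{x}$, obtaining $\Theta^{*}_{\mathcal{C}}=\Glor_{T\in\mathcal{C}}\bigvee_{v\in T}\mathsf{p}\subseteq\mathsf{x}^v$, and omits the verification. You use the same normal form and carry out that verification directly, including the $T=\emptyset$ case and the handling of empty parts of the split.
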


The natural deduction proof system is also similar to the one for $\PLglor$ in \cite{Yang16}, and we define the proof system for $\mathcal{L}_d$ in \Cref{rules:dc_logic}.

\begin{table}[ht]\centering \small
  \renewcommand*{\arraystretch}{3.8}
\begin{tabular*}{\linewidth}{@{\extracolsep{\fill}}|ccc|}
\hline

 \AxiomC{}
\RightLabel{$\top$I}
\UnaryInfC{$\sempty\subseteq\sempty$}
\DisplayProof
& 
\AxiomC{}
\noLine
\UnaryInfC{$q\subseteq\top$}
\AxiomC{}
\noLine
\UnaryInfC{$q\subseteq\bot$}
\RightLabel{$\bot$E }
\BinaryInfC{$\psi$}
\DisplayProof
&
\AxiomC{}
\noLine
\UnaryInfC{$\psi\lor\bot$}
\RightLabel{$\bot\lor$E}
\UnaryInfC{$\psi$}
\DisplayProof
\\[2ex] 

 \AxiomC{}
     \noLine  
     \UnaryInfC{$\mathsf{p}q\subseteq\mathsf{x}y$}
     \RightLabel{$\subseteq$Proj}
\UnaryInfC{$\mathsf{p}\subseteq\mathsf{x}$}
\DisplayProof
&

 \AxiomC{}
     \noLine  
     \UnaryInfC{$\mathsf{u}\mathsf{v}\mathsf{w} \subseteq \mathsf{x}\mathsf{y}\mathsf{z}$}
     \RightLabel{$\subseteq$Perm (1)}
\UnaryInfC{$\mathsf{u}\mathsf{w}\mathsf{v}\subseteq\mathsf{x}\mathsf{z}\mathsf{y}$} 
 \DisplayProof
& 
 \AxiomC{}
     \noLine  
     \UnaryInfC{$\mathsf{p}\subseteq\mathsf{x}$}
     \RightLabel{$\subseteq$Ext}
\UnaryInfC{$\mathsf{p}q\subseteq\mathsf{x}\top\lor\mathsf{p}q\subseteq\mathsf{x}\bot$}

\DisplayProof
 \\ [2ex]
 \hline

\multicolumn{3}{|c|}{
The rules $\land$I, $\land$E, $\glor$I and $\glor$E from \Cref{rules:wuc_logic}.}
 \\

\AxiomC{}
\noLine
\UnaryInfC{$\phi$}
\RightLabel{$\lor$I}
\UnaryInfC{$\phi\lor\psi$}
\DisplayProof
& 
\AxiomC{}
\noLine
\UnaryInfC{$\phi$}
\RightLabel{$\lor$I }
\UnaryInfC{$\psi\lor\phi$}
\DisplayProof
& 
\AxiomC{}
\noLine
\UnaryInfC{$\phi \lor \psi$}
\noLine
\AxiomC{[$\phi$]}
\noLine
\UnaryInfC{\vdots}
\noLine
\UnaryInfC{$\chi$}
\AxiomC{[$\psi$]}
\noLine
\UnaryInfC{\vdots}
\noLine
\UnaryInfC{$\chi$}
\RightLabel{$\lor$E (2)}
\TrinaryInfC{$\chi$}
\DisplayProof\\ 

 \multicolumn{3}{|c|}{
The rules $\lor$Com, $\lor$Mon and $\lor\glor$Distr from  \Cref{rules:dc_logic}.}
 \\ 
 
\hline

\multicolumn{3}{|c|}{(1) Provided that $|y|=|v|$ and $|z|=|w|$. (2) $\chi$ is $\glor$-free.}
 \\ 
\hline

\end{tabular*}
 \caption{System for $\mathcal{L}_d$.}
\label{rules:dc_logic}
\end{table}  

 The rules for the connectives are identical to the system for $\PLglor$ in \cite{Yang16}, except for the rule deriving $\phi\lor(\psi\lor\chi)\vdash (\phi\lor\psi)\lor\chi$ which can be omitted from both systems as discussed when introducing the system for $\mathcal{L}_{qd}$. Our rule $\bot\lor$E also appears in \cite{Yang16}, and the rule $\bot$E is a direct translation of the corresponding one in \cite{Yang16}. By $\top$I and $\subseteq$Ext we can derive the law of excluded middle, which with our notation is $q\subseteq\top\lor q\subseteq\bot$. We add two novel rules to the system: $\subseteq$Proj and $\subseteq$Perm. This is simply to be able to eliminate the hidden conjuncts in the dual primitive inclusion atom, for instance: $qr\subseteq \bot\top\vdash rq\subseteq\top\bot\vdash r\subseteq\top$, which in $\PLglor$ translates to $\neg q\land r\vdash \neg q$, derivable by $\land$E. 

 Based on this discussion, we simply state the soundness and completeness theorem of the system. 
 
\begin{theorem}
  The rules in \Cref{rules:dc_logic} form a sound and complete proof system for $\mathcal{L}_{d}$, i.e., for a set of $\mathcal{L}_{d}$-formulas $\Gamma\cup\{\phi\}$, if $\Gamma\models\phi$ then $\Gamma\vdash \phi$.
\end{theorem}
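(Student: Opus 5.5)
We follow the template used for $\mathcal{L}_{qd}$ and for $\PLglor$ in \cite{Yang16}: soundness rule by rule, then completeness via provable equivalence to the normal form $\Theta^{*}_{\mathcal{C}}$ and a comparison of normal forms.

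\textbf{Soundness.} Rules for $\land$ and $\glor$ are immediate. $\lor$I is sound because every formula of $\mathcal{L}_d$ has the empty team property, and $\lor$E is sound because its conclusion is $\glor$-free, hence union closed. $\lor$Com, $\lor$Mon and $\lor\glor$Distr are checked exactly as in \cite{Yang16}. For the atom rules we use $\mathsf{p}\subseteq\mathsf{x}\equiv\mathsf{p}^{\mathsf{x}}$.
\begin{itemize}
\item $\subseteq$Proj and $\subseteq$Perm amount to dropping and reordering conjuncts of a conjunction of literals.
\item $\bot$E holds since $q\land\neg q$ is satisfied only by $\emptyset$.
\item $\bot\lor$E holds since $\lVert\psi\lor\bot\rVert=\lVert\psi\rVert$.
\item For $\subseteq$Ext, split a team $T\models\mathsf{p}^{\mathsf{x}}$ into $\{v\in T\mid v(q)=1\}$ and $\{v\in T\mid v(q)=0\}$.
\end{itemize}

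\textbf{Normal form lemma.} For every $\phi\in\mathcal{L}_d$ we show $\phi\dashv\vdash\Theta^{*}_{\mathcal{C}}$ for some nonempty $\mathcal{C}$, where $\theta^{*}_T:=\bigvee_{v\in T}\mathsf{p}\subseteq\mathsf{x}^v$ and $\theta^{*}_\emptyset:=\bot$. The argument is by induction on $\phi$, and it is the $\mathcal{L}_{qd}$ argument of \Cref{qd nf lemma} with every full-team case deleted.

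We first derive the $\mathcal{L}_d$ analogues of \Cref{qd lm derivations}, namely $\subseteq$Aug, $\lor\glor\lor$Distr, $\land\glor$Distr and $\land\lor$Distr. Augmentation is obtained by repeating the derivation in \Cref{derivability of augmentation} with $\subseteq$ in place of $\fullsubseteq$.

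\emph{Atom case.} If the atom contains $rr\subseteq y_iy_j$ with $y_i\neq y_j$, we derive $\bot$ by $\subseteq$Proj and $\bot$E, so $\mathcal{C}=\{\emptyset\}$. Otherwise we expand $\mathsf{q}\subseteq\mathsf{y}$ to $\bigvee_{v\models\mathsf{q}^{\mathsf{y}}}\mathsf{p}\subseteq\mathsf{x}^v$ by repeated $\subseteq$Ext, $\subseteq$Perm and $\lor$Mon. The converse direction uses $\lor$E with $\subseteq$Proj and $\subseteq$Perm.

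\emph{Connective cases.}
\begin{itemize}
\item $\glor$: take the union of the two index families.
\item $\lor$: distribute using $\lor\glor$Distr, then merge $\theta^{*}_T\lor\theta^{*}_S$ into $\theta^{*}_{T\cup S}$. Duplicate disjuncts are removed by $\lor$E and copies of $\bot$ by $\bot\lor$E.
\item $\land$: distribute using $\land\glor$Distr and $\land\lor$Distr. Each conjunct $\mathsf{p}\subseteq\mathsf{x}^v\land\mathsf{p}\subseteq\mathsf{x}^w$ with $v\neq w$ reduces to $\bot$ by $\subseteq$Proj, $\subseteq$Perm and $\bot$E, and is then removed by $\bot\lor$E. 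This leaves $\theta^{*}_{T\cap S}$.
\end{itemize}

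\textbf{Completeness.} By compactness it suffices to treat a single premise $\gamma$. Suppose $\gamma\models\phi$. The normal form lemma gives $\gamma\dashv\vdash\Theta^{*}_{\mathcal{D}}$ and $\phi\dashv\vdash\Theta^{*}_{\mathcal{C}}$, and soundness yields $\Theta^{*}_{\mathcal{D}}\models\Theta^{*}_{\mathcal{C}}$. The analogue of \Cref{lemma wdc} (from \cite{Yang16}, via $\mathsf{p}\subseteq\mathsf{x}\equiv\mathsf{p}^{\mathsf{x}}$) then gives, for each $T\in\mathcal{D}$, some $S\in\mathcal{C}$ with $T\subseteq S$.

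Next we derive $\theta^{*}_T\vdash\theta^{*}_T\lor\theta^{*}_{S\setminus T}=\theta^{*}_S$ by $\lor$I. If $T=\emptyset$, this step becomes $\bot\vdash\theta^{*}_S$, obtained from $\bot\lor$E together with $\lor$I, or via $\bot$E after deriving $q\subseteq\top$ and $q\subseteq\bot$ from $\bot$. We finish with $\glor$I and $\glor$E.

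\textbf{Main obstacle.} The delicate step is the $\land$ and $\lor$ cases of the normal form lemma. There one must check that the weakened rule set, with no associativity rule and with $\lor$E restricted to $\glor$-free conclusions, still proves the distributivity laws and the cleanup of $\bot$-disjuncts. A second point to verify is that $\bot$ as a disjunct inside $\theta^{*}_T$ is handled correctly. We would resolve both by appealing to \cite{Yang16} and to the associativity discussion in \cite{Yang26}.
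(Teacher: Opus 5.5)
Your proposal is correct and takes essentially the route the paper intends. The paper gives no explicit proof here and only defers to \cite{Yang16} via $\mathsf{p}\subseteq\mathsf{x}\equiv\mathsf{p}^{\mathsf{x}}$, and your plan matches that intent: rule-by-rule soundness, a provable normal form $\Theta^{*}_{\mathcal{C}}$ obtained as in \Cref{qd nf lemma} with the full-team cases deleted, and completeness by comparing normal forms. One small point: in the converse direction of the atom case, atoms with repeated variables such as $rr\subseteq\top\top$ need your derived $\subseteq$Aug, since $\subseteq$Proj and $\subseteq$Perm alone cannot lengthen a sequence.
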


\subsection{Symmetry and might modalities}





We summarize the expressivity results for the logics by illustrating the syntactical duality of the normal forms between the (quasi) upward and downward closed settings.  

Let $\mathcal{C}$ be any nonempty team property. In each of the four settings we have considered, we define the closure of $\mathcal{C}$ as follows: 

\begin{enumerate}[label=\textbf{-}]

    \item The quasi upward closure of $\mathcal{C}$ is $qu(\mathcal{C}):=\{\emptyset\}\cup \{T\mid T\supseteq S \text{ for some nonempty } S\in\mathcal{C}\}$.

    \item The upward closure of $\mathcal{C}$ is $u(\mathcal{C}):=\{T\mid T\supseteq S \text{ for some } S\in\mathcal{C}\}$.

    \item The quasi downward closure of $\mathcal{C}$ is $qd(\mathcal{C}):=\{\mathbb{F}\}\cup \{T\mid T\subseteq S \text{ for some non full } S\in\mathcal{C}\}$.

    \item The downward closure of $\mathcal{C}$ is $d(\mathcal{C}):=\{T\mid T\subseteq S \text{ for some } S\in\mathcal{C}\}$.

\end{enumerate}

Using the normal forms for the four logics, we conclude that it is possible to capture the (quasi) upward/downward closure of any nonempty team property $\mathcal{C}$. We capture $\{\emptyset\}$ in  $\mathcal{L}_{qu}$ by $\bot$, and $\{\mathbb
F\}$ in  $\mathcal{L}_{qd}$ by $\bullet$. For a nonempty team property $\mathcal{C}$ that is not $\{\emptyset\}$ or $\{\mathbb{F}\}$, we illustrate the syntactical duality in the normal forms of the four logics with which we capture its closure. In particular, in place of the conjunctions found in the normal forms in the (quasi) upward closed settings, there are split-disjunctions in the (quasi) downward closed settings. Furthermore, the variants of the (nonempty) primitive inclusion atoms become (full) dual primitive inclusion atoms in the (quasi) downward closed settings. 

\bigbreak

\fbox{
\begin{minipage}{.5\textwidth}

Quasi upward closed setting:     $$\lVert\Glor_{T\in\mathcal{C}\setminus \{\emptyset\}}\bigwedge_{v\in T} \mathsf{x}^v\subseteq \mathsf{p}\rVert=qu(\mathcal{C}).$$

Upward closed setting:      $$\lVert\Glor_{T\in\mathcal{C}}\bigwedge_{v\in T} \mathsf{x}^v\subseteqq \mathsf{p}\rVert=u(\mathcal{C}).$$

\end{minipage}

\begin{minipage}{.4\textwidth}

Quasi downward closed setting: 
   $$\lVert\Glor_{T \in \mathcal{C}\setminus\{\mathbb{F}\}} \bigvee_{v\in T} \mathsf{p}\fullsubseteq \mathsf{x}^v\rVert=qd(\mathcal{C}).$$

       Downward closed setting: 
$$\lVert\Glor_{T \in \mathcal{C}} \bigvee_{v\in T} \mathsf{p}\subseteq \mathsf{x}^v\rVert=d(\mathcal{C}).$$
\end{minipage}
}

\bigbreak

We end by making the connection between (nonempty) primitive inclusion atoms and (nonempty) might modalities from the literature \cite{Veltman,HellaStumpf15, AHY}, \cite{anttila2025convex}. 
Recall the semantics of three might modalities:  
 \begin{align*}
 T\models\singlemight\phi\quad \text{iff}\quad& T=\emptyset, \text{ or there is } v\in T \text{ such that } \{v\}\models\phi.\\
T\models\might\phi\quad \text{iff}\quad& T=\emptyset, \text{ or there is a nonempty } S\subseteq T \text{ such that } S\models\phi.\\
T\models\blackdiamond\phi\quad \text{iff}\quad&\text{there is a nonempty } S\subseteq T \text{ such that } S\models\phi.
\end{align*}
%

We list some semantic connections between the (nonempty) primitive inclusion atoms and the might modalities.
\[\text{$ \top\subseteq p\equiv \might p$,\quad 
$\mathsf{x}^v\subseteq \mathsf{p}\equiv \might (\mathsf{p}^{v})$,\quad
$\top\subseteqq p\equiv \blackdiamond p$, \quad and \quad 
$\mathsf{x}^v\subseteqq \mathsf{p}\equiv \blackdiamond (\mathsf{p}^{v})$,}\]

It is easy to see that for classical formulas $\alpha$, $\singlemight\alpha\equiv\might\alpha$, hence the equivalences above that hold for $\might$ also hold for $\singlemight$.


We can also consider the dual to the correspondence $\top\subseteq p$ and `might $p$', as the one between $p\subseteq \top$ and `must $p$', since $T\models p\subseteq\top$ means that $p$ \emph{must} be true in the whole team. 

Furthermore, we can consider teams as information states, where asserting \emph{whether} $p$ amounts to $p$ having the same truth value in the whole team. `Might $p$' thus describes the possibility that after refining the information state, $p$ might be asserted as true. This view also motivates the full team property as a dual to the empty team property. If a team represents an information state, the full team represents all possibilities, and thus no information. Conversely, for a team seen as a set of data points, the empty team represents no data.



\section{Conclusion and future work}

Let us end with a summary. Firstly, we introduce logics expressively complete for (quasi) downward and (quasi) upward closed properties. In particular, the variants of the primitive inclusion atoms used in the (quasi) upward closed setting have equivalent formulas using variants of the might modality. With this perspective, we can interpret the variants of the dual primitive inclusion atoms used in the downward closed setting as must modalities. Moreover, a syntactic duality between the (quasi) downward and (quasi) upward closed settings is evident in the normal forms of the logics. Lastly, we defined sound and complete natural deduction systems for each logic. 

Let us identify some directions of future work. The first being to study logics that essentially combine a (quasi) downward closed logic with a (quasi) upward closed one.

\begin{enumerate}[label=\textbf{-}]
    \item We extend $PL(\subseteq)$ with the global disjunction to obtain
$\mathcal{L}_{\emptyset}$, which
is expressively complete for all team properties with the empty team.
    \[\phi::= \bot \mid p \mid \neg p \mid \mathsf{x}\subseteq \mathsf{p} \mid (\phi\land\phi)\mid (\phi\lor\phi)\mid (\phi\glor\phi).\]

It is easy to see that $\mathcal{L}_{\emptyset}$ has the normal forms $\bot$ and: 
$$\Glor_{t\in\mathcal{C}\setminus \{\emptyset\}} (\bigvee_{v\in t} \chi_{v}\land \bigwedge_{v\in t} \mathsf{x}^v\subseteq \mathsf{p}).$$

\item $\mathcal{L}_{\mathsf{F}}$ is expressively complete for all team properties with the full team.
    \[\phi::= \bullet \mid \top \mid \mathsf{p}\fullsubseteq \mathsf{x} \mid \mathsf{x}\subseteq \mathsf{p} \mid (\phi\land\phi)\mid (\phi\lor\phi)\mid (\phi\glor\phi).\]

Now $\mathcal{L}_{\mathsf{F}}$ has the normal forms $\bullet$ and: 
$$\Glor_{t\in\mathcal{C}\setminus \{\mathsf{F}\}} (\bigvee_{v\in t} \mathsf{p}\fullsubseteq \mathsf{x}^v\land \bigwedge_{v\in t} \mathsf{x}^v\subseteq \mathsf{p}).$$
\end{enumerate}

As future work remains the axiomatizations of these two logics. 

For the (quasi) upward closed logics we covered in this paper, one can study them more closely by considering connections to natural language, complexity questions,  sequent calculus proof systems, and their modal variants.

\section*{}
\bibliographystyle{plainurl}
\bibliography{mybib}

\end{document}